\DeclareMathOperator*{\argmax}{argmax}
\newcommand{\subscript}[2]{$#1 _ #2$}
\DeclareMathAlphabet{\mathpzc}{OT1}{pzc}{m}{it}
\newcommand{\shnote}[1]%
    {\textcolor{magenta}{ #1}}
\newcommand{\zgnote}[1]%
    { \textbf{\textcolor{red}{#1}} }
\newcommand{\zgrvs}[1]%
    { \textbf{\textcolor{red}{#1}} }
\newcommand{\zgmv}[1]%
    { \textcolor{blue}{#1}}
\newcommand{\dst}{\text{dst}} 
\newcommand{\subs}{z} 
\newcommand{\tsset}{\mathcal{Z}}
\newcommand{\tinit}{t}
\newcommand{\tvar}{s}
\newcommand{\thor}{0} 
\newcommand{\state}{x} 
\newcommand{\dyn}{f} 
\newcommand{\poi}{p} 
\newcommand{\ctrl}{u} 
\newcommand{\dstb}{d} 
\newcommand{\csig}{\ctrl(\cdot)} 
\newcommand{\dsig}{\dstb(\cdot)} 
\newcommand{\csigopt}{\ctrl^*(\cdot)} 
\newcommand{\dsigopt}{\dstb^*(\cdot)}%
\newcommand{\dmapopt}{\dmap^*[\ctrl]} 
\newcommand{\dset}{\mathcal{D}}
\newcommand{\dfset}{\mathbb{D}}
\newcommand{\dmap}{\lambda} 
\newcommand{\Dmap}{\Lambda} 
\newcommand{\hjloss}{h}
\newcommand{\loss}{\ell}
\newcommand{\cset}{\mathcal{U}} 
\newcommand{\cfset}{\mathbb{U}} 
\newcommand{\traj}{\xi(\tvar;\tinit, \state,\csig,\dsig)} %
\newcommand{\trajmap}{\xi(\tvar;\tinit, \state,\csig,\dmap)}
\newcommand{\opttraj}{\xi(\tvar;\tinit, \state, u^*(\cdot),\dmap[u^*])}
\newcommand{\rcis}{\mathcal{I}}
\newcommand{\mrcis}{\mathcal{I}_m}
\newcommand{\roes}{\mathcal{D}_\text{ROES}}
\newcommand{\dom}{\mathcal{D}_\gamma}
\newcommand{\valfunc}{V} 
\newcommand{\tclvf}{\valfunc_\gamma} 
\newcommand{\clvf}{\valfunc_\gamma^\infty} 
\newcommand{\minval}{V^\infty_m} 
\newtheorem{remark}{Remark}
\newtheorem{proposition}{Proposition}
\newtheorem{definition}{Definition}
\newtheorem{theorem}{Theorem}
\newtheorem{lemma}[theorem]{Lemma}
\begin{document}

\title{Robust Control Lyapunov-Value Functions for Nonlinear Disturbed Systems} 


\author{Zheng Gong ,\IEEEmembership{Student Member, IEEE}, and Sylvia Herbert, \IEEEmembership{Member, IEEE}
\thanks{This work is supported by ONR YIP (\#N00014-22-1-2292) and the UCSD JSOE Early Career Faculty Award.} 
\thanks{All authors are in Mechanical and Aerospace Engineering at UC San Diego (e-mail: {\href{mailto:zhgong@ucsd.edu}{zhgong}, \href{mailto:sherbert@ucsd.edu}{sherbert}\}@ucsd.edu.})
}
}


\maketitle



\begin{abstract}                          
This article presents a method to construct robust control Lyapunov value functions (R-CLVFs) for robust and stabilizing control of nonlinear systems with input and disturbance bounds.  
Through modifications to Hamilton-Jacobi reachability analysis, the R-CLVF can be computed via dynamic programming. We prove that the R-CLVF can be used to stabilize the system to the smallest robust control invariant set of any point of interest at a user-specified exponential rate. The R-CLVF additionally can be used to obtain the domain over which stabilization at the desired rate is possible, i.e., the region of exponential stabilizability. Due to the computationally expensive nature of dynamic programming, we additionally propose methods to improve tractability via decomposing the dynamical model or warm-starting the computation. Under certain assumptions, we show that these approaches maintain exact solutions.
Three numerical examples are provided, validating our definition of the smallest robust control invariant set, illustrating the impact of the exponential rate and different loss functions, and demonstrating the efficiency of computation using warm-starting and decomposition.
\end{abstract}

\textbf{Keywords:}
Nonlinear systems, Optimal control, Robust control, Stability of nonlinear systems.



\section{INTRODUCTION}
\label{sec: introduction}

\textit{Liveness} and \textit{safety} are two main concerns for autonomous systems working in the real world. Using control Lyapunov functions (CLFs) to stabilize the trajectories of a system to an equilibrium point~\cite{sontag1989universal,freeman1996control,hassan2002nonlinear} is a popular approach to ensure liveness, whereas using control barrier functions (CBFs) to guarantee forward control invariance is popular for maintaining safety~\cite{cbfsurvey, ames2014rapidly,cbfqptac}. However, identifying valid CLFs and CBFs is challenging for general nonlinear systems, and users of these methods typically rely on hand-designed or application-specific CLFs and CBFs \cite{artstein1983stabilization,clf_zubov,giesl2015review,giesl2008construction,xu2015robustness}. Finding these hand-crafted functions can be difficult, especially for high-dimensional systems with state and/or input constraints.


Liveness and safety can also be achieved by formal methods such as Hamilton-Jacobi (HJ) reachability analysis~\cite{hjreachabilityoverview}. This method formulates \textit{liveness} and \textit{safety} as optimal control problems, and has been used for applications in aerospace, autonomous driving, and more~\cite{evans_hj,bardi2008optimal,crandall1983viscosity,crandall1984some,frankowska1989hamilton}. This method computes a value function whose level sets provide information about safety (or liveness) over space and time, and whose gradients inform the safety (or liveness) controller. This value function is the unique viscosity solution to a Hamilton-Jacobi-Issac's Variational Inequality (HJI-VI), can be computed numerically using dynamic programming (DP) for general nonlinear systems, and can accommodate input and disturbance bounds. Undermining these appealing benefits is the curse of dimensionality: computation scales exponentially with state dimension. Ongoing research has improved computational efficiency~\cite{decomposition,bansal2021deepreach,herbert2021safelearning,10365682}, but performing DP in high dimensions (6D or more) remains challenging.

Standard HJ reachability analysis focuses on problems such as the minimum time to reach a goal, or how to avoid certain states for a finite (or infinite) time horizon. It does not stabilize a system to a goal after reaching it. In our previous work~\cite{gong2022constructing}, we modified the value function and defined the control Lyapunov value function (CLVF) for undisturbed nonlinear systems with compact control. The CLVF finds the smallest control invariant set and the region of exponential stabilizability (ROES) of the system given a user-specified exponential rate $\gamma$. {The original work is restrictive and suffers from the curse of dimensionality. }

Other works that study the relationship between CLF and some partial differential equations (PDE) include Zubov's method and its extensions~\cite{zubov1961methods,grune2000computing,dubljevic2002new,clf_zubov}.{~\cite{zubov1961methods} is the first that relates the Lyapunov function with a PDE, called Zubov's equation. Later,~\cite{grune2000computing,dubljevic2002new,clf_zubov} extends this early work to systems with control. The general process is to define an optimal control problem, whose value function itself is a CLF and is the viscosity solution to some PDE of Zubov's type. Therefore, the CLF (which is the value function) can be found by solving the PDE. Viscosity solutions are considered because these PDEs usually do not admit smooth solutions.} Another work that considers the relation between the state-constrained optimal control problem and the HJI-VI is~\cite{altarovici2013general}, where an auxiliary unconstrained optimal control problem is solved using the HJI-VI. However, this requires augmenting to one additional dimension, which requires much larger computational resources. {Further, there also exist works on computing the region of attraction of a given system. These methods use optimization like linear matrix inequality~\cite{chesi2009estimating}, sum of squares~\cite {topcu2007stability,topcu2009robust}, or convex program~\cite{henrion2013convex}. However, they usually only guarantee under approximation and require special forms of the dynamics (e.g., polynomial dynamics). In contrast, we provide exact recovery of the ROES for general nonlinear systems, and we consider the convergence to a set, instead of just the equilibrium point.} 

This work seeks to {extend our previous CLVF work, correct the errors, and provided numerical feasible approach for high-dimensional systems.} The main contributions are: 
\begin{enumerate}
    \item {We define the smallest robustly control invariant set (SRCIS) of a point of interest (POI) given system dynamics. We extend our previous work~\cite{gong2022constructing} and define the robust-CLVF (R-CLVF), which is with respect to an arbitrary POI, instead of an equilibrium point. }
    \item {We prove that the R-CLVF is Lipschitz continuous, satisfies the DP principle (DPP), is a viscosity solution to the corresponding R-CLVF variational inequality (VI), and we update the algorithm for computing it. We also show that the SRCIS is the zero sub-level set of the R-CLVF, and that the R-CLVF can stabilize the system to the SRCIS of the POI from the ROES with an exponential rate $\gamma$ defined by the user. We also show that the domain of the R-CLVF is exactly the ROES.}
    \item {Our method uses $\gamma$ as an exponential amplifier, not as a discount factor (which is common in the literature). This amplifier provides straightforward physical intuition for our formulation. }
    \item Two methods to accelerate computation are introduced: warm-starting and system decomposition. We prove that under certain assumptions, these acceleration methods will recover exact R-CLVF. 
\end{enumerate}

The paper is organized in the following order: Section \ref{sec: background} provides background information on HJ reachability analysis. Section \ref{sec: r-clvf} introduces the R-CLVF {and presents the main results of the paper, e.g., the existence of the R-CLVF is equivalent to the exponential stabilizability of the system.} A feasibility-guaranteed quadratic program (QP) controller is provided. Section \ref{sec: speedup} introduces warm-starting and system decomposition to accelerate the computation. Section \ref{sec: examples} shows three numerical examples, validating the theory. 

\section{BACKGROUND}
\label{sec: background}
In this paper, we seek to exponentially stabilize a given nonlinear time-invariant dynamic system with {compact} control and disturbance to its SRCIS. We start with the background information. 

\subsection{Problem Formulation}\label{sec:Problem Formulation}
Consider the nonlinear time-invariant system  
 \begin{equation} \label{eqn:dynamic_system}
    \dot{\state}(\tvar) = \dyn \left( \state (\tvar ), \ctrl(\tvar) ,\dstb(\tvar) \right), \hspace{0.5em} \tvar \in [\tinit, \thor], \hspace{0.5em} \state(\tinit)=\state_0,
\end{equation} 
where $ \tinit <0$ is the initial time, and $\state_0\in \mathbb R^n$ is the initial state. The control signal $\csig$ and disturbance signal $\dsig$ are drawn from the set of measurable functions $\cfset_t$ and $\dfset_t$. Assume also the control input $\ctrl$ and disturbance $\dstb$ are drawn from convex compact sets $ \cset \subset \mathbb R^m$ and $ \dset \subset \mathbb R^p$ respectively. We have: 
\begin{align*}
     \cfset_t := \{ \csig : [\tinit, \thor] \mapsto \cset, \csig \text{ is measurable} \}, \\ 
     \dfset_t := \{ \dsig : [\tinit, \thor] \mapsto \dset, \dsig \text{ is measurable} \}.
\end{align*}
We make the following assumptions about the system:
\begin{enumerate}[label=(\subscript{A}{{\arabic*}})]
    \item The dynamic model  $\dyn: \mathbb R^n \times \cset \times \dset \mapsto \mathbb R^n$ is uniformly continuous in $(\state,\ctrl,\dstb)$, Lipschitz continuous in $\state$ for fixed $\csig$ and $\dsig$, with Lipschitz constant $L_f$. \label{assumption 1}
    \item The dynamic model  $\dyn: \mathbb R^n \times \cset \times \dset \mapsto \mathbb R^n$ is bounded $\forall \state \in \mathbb R^n, \ctrl \in \cset, \dstb \in \dset$.\label{assumption 2}
\end{enumerate}

Under these assumptions, given an initial state $\state$ and control and disturbance signals $\csig$, $\dsig$, there exists a unique solution $\traj$, $\tvar \in  [\tinit,\thor]$ of the system~\eqref{eqn:dynamic_system}. We call this solution the trajectory of the system, and where possible, we use $\xi(\tvar)$ for conciseness. Further assume the disturbance signal can be determined as a strategy with respect to the control signal: $\dmap: \cfset_t \mapsto \dfset_t$, drawn from the set of non-anticipative maps $\dmap \in \Dmap_\tinit $ ~\cite{varaiya1967existence}, defined as:
\begin{align*}
    \Dmap_\tinit := \{ & \mathcal N: A(t) \mapsto B(t): a(\tvar) = \hat a(\tvar) \text{ a.e. } \forall \tvar \in [\tinit,\thor] \\
    & \implies \mathcal N[a](\tvar) = \mathcal N[\hat a](\tvar) \text{ a.e. } \forall \tvar \in [\tinit,\thor] \} .
\end{align*}

In this paper, we seek to stabilize the system~\eqref{eqn:dynamic_system} to its SRCIS. We first introduce the notion of a robust control invariant set (RCIS).

\begin{definition} (RCIS)
    A \textbf{closed} set $\rcis$ is robustly control invariant for \eqref{eqn:dynamic_system} if $\forall \state \in \rcis$, $\forall \dmap \in \Dmap_\tinit$, $\exists \csig \in \cfset_t $ such that $\xi(\tvar; \tinit,\state, \csig, \dmap [\ctrl]) \in \rcis$, $\forall \tvar \in [\tinit,\thor]$.
\end{definition}
We also assume the following:
\begin{enumerate}[label=(\subscript{A}{{\arabic*}})]
    \setcounter{enumi}{2}
    \item When the system has equilibrium points, the origin \textbf{0} is one, i.e. $\dyn( $\textbf{0,0,0}$) = $ \textbf{0}. 
    \item Given any point of interest (including the origin), there exists a robustly control invariant set, whose convex hull contains the point of interest.  
\end{enumerate}

Assumption $A_3$ is standard. For $A_4$, some systems might not posses an equilibrium point, e.g., the 3D Dubins car with constant velocity or any systems that do not satisfy the small control property.

\begin{remark}
    $A_4$ is a weaker assumption compared to assuming the existence of a CLF. For a valid CLF, the region of null controllability (RONC) is also robust control invariant. Further, all trajectories starting from the RONC can be stabilized to the equilibrium point. With $A_4$, we extend the classical definition of stabilizing to the equilibrium point to stabilizing to the SRCIS, if the former is impossible to achieve. 
\end{remark}

We are also interested in finding the ROES of a set. We first define the signed distance from a point to a set $\mathcal A$ to be 
\begin{equation} \label{eqn:dst_to_set}
    \dst(\state;\mathcal A) = \begin{cases}
        \min _ {a \in \partial \mathcal A} ||x-a|| & \state \notin \mathcal A ,\\
        -\min _ {a \in \partial \mathcal A} ||x-a|| & \state\in \text{int} (\mathcal A) ,\\
        0 & \state \in \partial \mathcal A.
    \end{cases} 
\end{equation}
where $\partial \mathcal A$ is the boundary of $ \mathcal A$ and any vector norm is applicable here. 
\begin{definition} \label{def:ROES}
The ROES of a set $\rcis$ is the set of states from which the trajectory converges to $\rcis$ with an exponential rate $\gamma$:
\begin{align*}
    &\roes  :=\{ x\in \mathbb R^n | \hspace{2mm} \forall \dmap \in \Dmap_\tinit, \exists \csig \in \cfset_t, \gamma,c > 0
    \text{ s.t. }  \\
    & \dst(\xi(\tvar; \tinit,\state, \csig, \dmap [\ctrl]);\rcis) \leq c e^{-\gamma (\tvar-\tinit)} \dst(\state;\rcis) \}.
\end{align*}
\end{definition}

\subsection{HJ Reachability and SRCIS} \label{sec:Optimal_control_HJB}
In the previous work~\cite{gong2022constructing}, we proposed constructing the CLVF using HJ reachability analysis. This is done by formulating a reachability \textit{safety} problem, where the system tries to avoid all regions of the state space that are not the origin. This problem can be solved as an optimal control problem. 


Traditionally in HJ reachability analysis, a continuous loss function $ \hjloss : \mathbb R^n \mapsto \mathbb R$ is defined such that its zero super-level set is the failure set $\mathcal F = \{\state: \hjloss (\state) > 0 \}$. 
The finite-time horizon cost function captures whether a trajectory enters $\mathcal{F}$ at any time in $[\tinit,\thor]$ under given control and disturbance signals by computing the maximum loss accrued over time: 
\begin{equation} \label{eqn:finite_time_costfunctional}
    J(\tinit, \state, \csig , \dsig ) = \max_{\tvar \in [\tinit, \thor]} \hjloss \bigl(\traj \bigl).
\end{equation} 
The value function is the cost under the optimal control signal and worst-case disturbance: 
\begin{align}\label{eqn:finite_time_value_function}
    V(\state, \tinit) &=\sup _{\dmap \in \Dmap_\tinit} \inf _{ \csig \in \cfset_t} J(\tinit, \state, \csig ,\dmap[\ctrl]), \notag \\
    &=\sup _{\dmap \in \Dmap_\tinit} \inf _{\csig \in \cfset_t} \max_{\tvar \in [\tinit,\thor]} \hjloss (\xi(\tvar;\tinit, \state, \csig, \dmap[\ctrl]) .
\end{align} 
For a given $\tinit$, the (strict) zero super-level set  $\mathcal V_0  = \{\state: V(\state, \tinit) > 0 \}$ denotes the set of initial states such that there exists a disturbance signal that drives the trajectory to $\mathcal F$ for some time $s\in[t,0]$, despite the control signal used. The zero sub-level set of $V(\state, \tinit)$ is therefore \textit{safe} for the time horizon $[t,0]$. This can be extended to say that each $\alpha$ sub-level set $\mathcal V_\alpha  = \{\state: V(\state, \tinit) \leq \alpha \}$ is safe w.r.t. the set defined by $ \mathcal F_\alpha = \{ \state: \hjloss (\state) > \alpha \}$.

The infinite-time horizon value function is defined by taking the limit (if it exists) of $V(\state, \tinit)$ as $t\rightarrow-\infty$ \cite{fialho1999worst}, 
\begin{align}\label{eqn:infinite_time_value_function}
    V^\infty(\state) = \lim_{ \tinit \rightarrow-\infty} V(\state, \tinit).
\end{align} %
Different from the time-varying value function~\eqref{eqn:finite_time_value_function}, for all states in the $\alpha$ sub-level set of $V^\infty(\state)$, there always exists a control signal such that the maximum loss is lower than or equal to $\alpha$ despite the disturbance signal. This means every $\alpha$ sub-level set of $V^\infty(\state)$ is robustly control invariant, and the trajectories can be maintained within a particular level set boundary. Further, this set is the \textit{largest} RCIS contained within the $\alpha$ sub-level set of $\hjloss(\state)$~\cite{choi2021robust}.

It has been shown that~\eqref{eqn:finite_time_value_function} is the unique viscosity solution to the following HJI-VI~\cite{fisac2015reach}:  
 \begin{equation} \label{eqn:HJI-VI}
    \begin{aligned}
         &0= \min \biggl\{\hjloss (\state) - V(\state, \tinit), \\
        & \hspace{1em} D_{\tinit}  V(\state, \tinit) + \max_{ \dstb \in \dset} \min _{\ctrl \in \cset}  D_{\state} V(\state, \tinit) \cdot \dyn(\state, \ctrl , \dstb)  \biggl\},
    \end{aligned}
\end{equation} 
where $D_t$ and $D_x$ denote the derivative over time and state respectively. The value function \eqref{eqn:finite_time_value_function} can be computed numerically using DP by solving this HJI-VI recursively over time. And the infinite-time value function~\eqref{eqn:infinite_time_value_function} can be obtained by solving this HJI-VI until convergence. 


\begin{remark}\label{remark:physical_intuition}
    In this paper, we restrict the selection of $\hjloss (\state)$ to be vector norms (e.g., p-norms, or weighted Q norms), and specifically, it is the same norm used in equation~\eqref{eqn:dst_to_set}. More specifically, given any POI $\poi \in \mathbb R^n$, we pick $\hjloss (\state ; \poi) = \|\state -\poi \|$ {(where $p$ is a hyperparameter)}. In other words, the loss function measures the distance of a state to the POI. With this restriction, the cost function \eqref{eqn:finite_time_costfunctional} captures the largest deviation from the POI of a given trajectory, initialized at $\state$ with $\csig$ and $\dsig$ applied, in time horizon $[\tinit,\thor]$. The infinite time value function \eqref{eqn:infinite_time_value_function} captures the largest deviation with optimal control and disturbance signals applied in an infinite time horizon. {Further, the limit in~\eqref{eqn:infinite_time_value_function} may not exist globally, but always exists on any compact subset of $\mathbb R^n$ under our assumptions.}
\end{remark}


{\begin{definition}(SRCIS.) Given a POI $\poi$, a norm, compute the value function $V^\infty(x)$ in~\eqref{eqn:infinite_time_value_function} and denote its minimal value as $\minval = \min_{x} V^\infty(x)$. The SRCIS, denoted as $\mrcis$, is the $\minval-$level set of $V^\infty$. 
\end{definition}}

{Note that any $\alpha-$sublevel set of $V^\infty$ is closed and bounded, therefore the minimal value $\minval$ is always attained. Further, since SRCIS is the level set corresponding to the minimal value, all states in it have the same value $\minval$. This also holds for the state $\bar x = \argmax_{x\in\mrcis} h(x;p)$. Because $\mrcis$ is robustly control invariant, the trajectory starting from $\bar x$ stays in $\mrcis$, and it's maximum deviation from $\poi$ is $\max_{x\in\mrcis} h(x;p)$. Combined, we have: }
\begin{align}\label{eqn:minval}
    \minval = \max_{\state \in  \mrcis} \hjloss (\state ; \poi).
\end{align}
In other words, any trajectory starting from $\mrcis$ has the same largest deviation \textbf{along time} (measured by $\hjloss$) to the POI, which is the value $\minval$. 

\begin{remark} \label{remark:levelsetCI}
    The SRCIS should be understood as `the RCIS, with the smallest largest deviation to the origin.'  Here the term \textit{smallest} means the `smallest level' of $V^\infty$, and  \textit{largest} means the largest deviation along time. Note that there may be other RCISs contained in the SRCIS, but they are not level sets of $V^\infty$. This is different from the `minimal RCIS' as defined in \cite{1406138,chen2018data}, where `minimal' is defined as `no subset is robust control invariant'.
    For illustration, consider the following example: 
\begin{align} \label{eqn:EX_IN2D}
    \dot x_1 = -x_1 +d, \quad \dot x_2 = x_2 + u
\end{align}
where $u\in[-1,1]$ and $d\in [-0.5,0.5]$. This system has an undisturbed, uncontrolled equilibrium point $[\state,\ctrl,\dstb] = [\textbf{0},0,0]$. It can be verified that $\rcis = \{ x_1\in [-0.5,0.5], x_2 = 0 \}$ is one `minimal RCIS' as all its subsets are not robustly control invariant. In fact, picking any $x_2 \in [-1,1]$ results in a `minimal RCIS.' On the other hand, picking $\hjloss (\state) = ||\state||_\infty$, the SRCIS is $\mrcis = \{ x_1,x_2 \in [-0.5,0.5] \}$. This is because though the control can stabilize any $|x_2| < 1$ to the origin, the disturbance is also strong enough to perturb any $|x_1|<0.5$ to leave the origin. Therefore, all states s.t. $x_1,x_2 \in [-0.5,0.5]$ have the same value, and the SRCIS measured by the $\infty$-norm is a square. Fig.~\ref{fig:IN2D_diff_cost} shows the SRCIS for three different choices of $\hjloss (\state)$ and the corresponding value function. 
\end{remark}

\begin{figure}[t]
\centering
\includegraphics[width=\columnwidth]{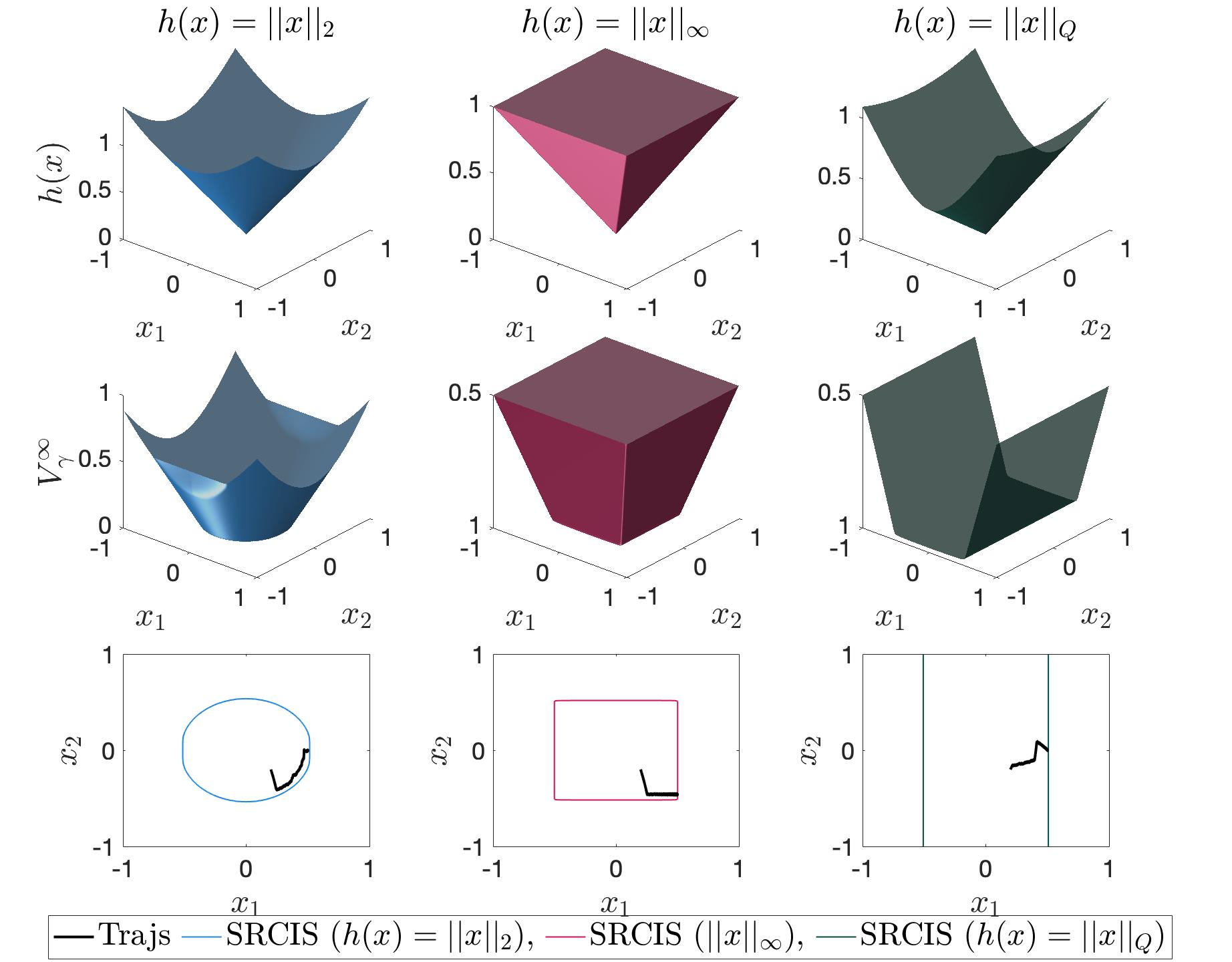}
    \caption{SRCIS corresponds to different loss functions for system \eqref{eqn:EX_IN2D}. Top left to right: different loss functions, including the 2-norm, infinity norm, and weighted Q-norm ($Q = diag [0.2, 1]$). Middle left to right: R-CLVF ($\gamma = 0$) when $h(\state) = ||\state||_2$, $||\state||_\infty$, $||\state||_Q$, and $||\state||_Q = \sqrt{\state ^T Q\state}$. Bottom left to right: the corresponding SRCIS and a trajectory starting inside the SRCIS. The robust control invariance is validated. 
    }
    \label{fig:IN2D_diff_cost}
\end{figure}


An interesting observation is that adding or subtracting a constant value to the loss function $h (\state)$, the corresponding SRCIS stays the same. 
{\begin{remark} \label{remark:moving_value_function}
Define $ \underline \hjloss (\state) = \hjloss (\state) - a$, and denote the corresponding value function as $\underline V(\state, \tinit)$, we have $\underline V(\state, \tinit) =  V(\state, \tinit) - a$. This is because 
\begin{align*} 
    \underline V(\state, \tinit) &= \sup _{\dmap \in \Dmap_\tinit} \inf _{u\in \cfset_t} \max _{s\in [\tinit,\thor]}  \underline \hjloss (\xi(\tvar;\tinit, x, \csig, \dmap[\ctrl]), \\ 
    &= \sup _{\dmap \in \Dmap_\tinit} \inf _{u\in \cfset_t} \max _{s\in [\tinit,\thor]} \big( h ( \xi(\tvar;\tinit, x, \csig, \dmap[\ctrl]) -a\big),   \\
 & = V(\state, \tinit) - a. 
\end{align*}
The last step is because $a$ is a constant. This holds for all $t\leq 0$, so we also have
\begin{align*}
    \underline V^\infty (x) = \lim_{t\rightarrow -\infty } \underline V(\state, \tinit)  = V^\infty(x) - a.
\end{align*}
\end{remark}}

Each level set of the HJ value function \eqref{eqn:infinite_time_value_function} is only robustly control invariant, there is no guarantee that the system can be stabilized to lower level sets or the origin. In our preliminary work \cite{ gong2022constructing}, we define the CLVF for undisturbed systems. In this article, we further develop the theory of CLVFs for disturbed systems and try to stabilize the system to any arbitrary POI. We also provide the necessary theorems for numerical implementation in high-dimensional nonlinear systems.   

\section{ROBUST CONTROL LYAPUNOV-VALUE FUNCTIONS}
\label{sec: r-clvf}
{In this section, we will pose an optimal control problem, whose value function is a viscosity solution to an HJ-type VI. This VI can be used to prove that the value function has CLF properties, i.e., it implies exponential stabilizability. We highlight that we use an exponential amplifier in our value function instead of a discount factor (which is common in the literature), and that our formulation has very straightforward intuition. }
This section is organized as follows: Section~\ref{Sec:TV-R-CLVF} defines the TV-R-CLVF and shows some key properties of it. Section~\ref{Sec:R-CLVF} defines the R-CLVF, shows the sufficient condition for its existence, and shows it satisfies the DPP (Theorem~\ref{thm:CLVF_dynamic_programming_principle}) and is the viscosity solution of a VI (Theorem~\ref{thm:CLVF_VI_viscosity_solution}). {Section~\ref{Sec:R-CLVF-EXP} shows that the existence of the R-CLVF (Theorem~\ref{thrm:CLVF_finite_eq_EC}) is equivalent to the exponential stabilizability to the SRCIS.} 
Section~\ref{Sec:RCLVF-QP} provides an feasibility guaranteed QP controller. 


\subsection{TV-R-CLVF} \label{Sec:TV-R-CLVF}
\begin{definition} A TV-R-CLVF is a function $V_{\gamma}(\state,\tinit): \mathbb R^n \times \mathbb R_- \rightarrow \mathbb R $ defined as: 
 \begin{align} \label{eqn:finite_time_CLVF}
    \tclvf (\state,\tinit) = \sup _{ \dmap  \in \Dmap_\tinit} \inf _{\csig \in \cfset_\tinit} J_\gamma ( \tinit ,\state, \csig, \dmap),
\end{align} 
where $J_\gamma ( \tinit ,\state, \csig, \dsig)$ is the cost function:
 \begin{align}\label{eqn:finite_time_CLVF_loss}
    J_\gamma ( \tinit ,\state, \csig, \dsig)  =\max _{ \tvar \in [\tinit, \thor]} e^{\gamma( \tvar - \tinit)}  \loss \bigl( \xi(\tvar; \tinit ,\state, \csig, \dsig) \bigl),
\end{align} 
$\gamma \geq 0$ is a user-specified parameter that represents the desired decay rate, and $ \loss (\state;\poi) = h(\state;\poi) - \minval$. 
\end{definition}

The cost at a state captures the maximum exponentially amplified distance between the trajectory (starting from this state) and the zero-level set of $\loss(\state)$. The optimal control tries to minimize this cost and seeks to drive the system towards the POI. In contrast, the disturbance tries to maximize the cost and push the system away from the POI. 

From~\eqref{eqn:finite_time_CLVF} and~\eqref{eqn:finite_time_CLVF_loss}, if a trajectory is initialized outside the zero sub-level set of $\loss(\state)$, the value $\tclvf (\state,\tinit)$ will always be positive. If starting inside the zero sub-level set of $\loss(\state)$, the value $\tclvf (\state,\tinit)$ might be positive or negative, depending on the time horizon $\tinit$. In other words, as $\tinit$ decreases, the zero sub-level set of $\tclvf (\state,\tinit)$ will shrink. In fact, we will show later that, given $\tinit$, the zero sub-level set of $\tclvf (\state,\tinit)$ is exactly the zero sub-level set of $V(\state, \tinit) - \minval$ (Lemma~\ref{lemma:same_zero_level}).

It should be noted that $\gamma$ serves as an exponential \textbf{amplifier} for the TV-R-CLVF, which in turn sets the desired exponential \textbf{convergence} rate of a trajectory to the SRCIS. This is introduced more formally in  Lemma~\ref{lemma:CLVF_exists}. 

We now prove some properties of the TV-R-CLVF in Proposition~\ref{prop: TV-R-CLVF_lipschitz} and show the mathematical foundation for how it can be obtained in Theorem~\ref{thrm:TV-R-CLVF_DPP} and~\ref{thm:TV-R-CLVF-VI-VS}. 
\begin{proposition} \label{prop: TV-R-CLVF_lipschitz}
    The TV-R-CLVF is Lipschitz in $\state, \tinit$ and bounded for any open set $\mathcal C$ .

    \begin{proof}
    See the Appendix.~\ref{App: proof_prop1}.
    \end{proof}
\end{proposition}

We now show that the TV-R-CLVF satisfies the DPP and is the unique viscosity solution to the TV-R-CLVF-VI. {Theorem~\ref{thrm:TV-R-CLVF_DPP} forms the foundation for the proof of Theorem~\ref{thm:CLVF_dynamic_programming_principle}; therefore, its proof is included in the main text of the paper. }

\begin{theorem}\label{thrm:TV-R-CLVF_DPP}
    $V_\gamma(\state,\tinit)$ satisfies the following dynamic programming principle for all $t<t+\delta \leq 0$: 
    \begin{align}\label{eqn:TV-R-CLVF_DPP}
       & V_\gamma(\state, \tinit) = \sup _{\dmap \in \Dmap_\tinit} \inf _{u \in \mathbb{U}} \max \biggl\{e^{\gamma \delta} V_\gamma(\xi(t+\delta),t+\delta), \notag \\
    & \hspace{6em }\max_{s\in[t, t+\delta]} e^{\gamma (s-t)} \loss(\xi(s)) \biggl\}.
    \end{align}
    \begin{proof}
{Let $W(x, t)$ denotes the RHS of \eqref{eqn:TV-R-CLVF_DPP}. By definition of $W$, $\forall \dmap \in \Dmap_\tinit$, $\exists \bar u(\cdot) \in \cfset_\tinit $, s.t. for all $ \varepsilon_1 > 0$, 
\begin{align} \label{eqn:CLVF_DPP_ineq1}
    W(x,t) \geq \max \bigr \{ e^{\gamma \delta} \tclvf (\xi (t+\delta; t,x,\bar u, \lambda [\bar u]), t+\delta) ,\notag \\ 
    \max_{s\in[t,t+\delta]} e^{\gamma (s-t)} \ell (\xi(s; t,x,\bar u, \lambda [\bar u]))    \bigl \}  -\varepsilon_1.
\end{align}
Further, by equation~\eqref{eqn:finite_time_CLVF}, for all $\varepsilon>0$, $\exists \bar \lambda \in \Dmap_\tinit$ s.t. for all $\csig \in \cfset_\tinit$, 
\begin{align}\label{eqn:tvdpp_ineq2}
    \tclvf(x,t) \leq \max_{s\in[t,0]} e^{\gamma (s-t)} \ell (\xi(s; t,x, u, \bar \lambda [ u])) +\varepsilon.
\end{align}
Denote $y =\xi (t+\delta; t,x,\bar u, \lambda [\bar u]), t+\delta) $. Again, by equation~\eqref{eqn:finite_time_CLVF}, $\forall \lambda \in \Dmap_{t+\delta}$, $\exists \tilde  u \in \cfset_{t+\delta}$, s.t. $\forall \varepsilon_2>0$,
\begin{align*}
    \tclvf(y,t+\delta) \geq \max_{s\in[t+\delta,0]} e^{\gamma (s-t)} \ell (\xi(s; t,x, \tilde u,  \lambda [\tilde u])) - \varepsilon_2.
\end{align*}
Multiplying $e^{\gamma \delta}$ on both side, we get
\begin{align}\label{eqn:tvdpp_ineq3}
    e^{\gamma \delta}\tclvf(y,t+\delta) \geq \max_{s\in[t+\delta,0]} e^{\gamma (s-t)} \ell (\xi(s; t,x,\tilde u,  \lambda [\tilde u])) \notag \\ - e^{\gamma \delta}\varepsilon_2.
\end{align}
Define the control:
\begin{equation*}
    \hat{u}(s) := 
    \begin{cases}
        \bar u(s)\ &\text{if}\quad t \leq s < t+\delta,\\
        \tilde u(s)\ &\text{if}\quad t+\delta \leq s \leq 0,
    \end{cases}
\end{equation*}
and disturbance $\hat \lambda$ s.t. $\hat \lambda [\tilde u](x) = \bar \lambda [\tilde u](s)$ for all $s \in [t, t+\delta]$. Plugging~\eqref{eqn:tvdpp_ineq3} into equation~\eqref{eqn:CLVF_DPP_ineq1}: 
\begin{align*}
    W(x,t) \geq & \max \bigr \{ \max_{s\in[t+\delta,0]} e^{\gamma (s-t)} \ell (\xi(s; t,x,\hat u,  \lambda [\hat u])) - e^{\gamma \delta}\varepsilon_2,\notag \\ 
    &\max_{s\in[t,t+\delta]} e^{\gamma (s-t)} \ell (\xi(s; t,x,\hat u, \hat \lambda [\hat u])) \bigl \}  -\varepsilon_1 ,\notag \\
    \geq & \max \bigr \{ \max_{s\in[t+\delta,0]} e^{\gamma (s-t)} \ell (\xi(s; t,x,\hat u, \hat \lambda [\hat u])) ,\notag \\ 
    &\max_{s\in[t,t+\delta]} e^{\gamma (s-t)} \ell (\xi(s; t,x,\hat u, \hat \lambda [\hat u]))    \bigl \}  -\varepsilon_1 - e^{\gamma \delta}\varepsilon_2, \\
    =& \max_{s\in[t,0]} e^{\gamma (s-t)} \ell (\xi(s; t,x,\hat u, \hat \lambda [\hat u]))    \bigl \}  -\varepsilon_1 - e^{\gamma \delta}\varepsilon_2.
\end{align*}
Notice we have replaced $\bar u, \tilde u$ with $\bar u$ and $\bar \lambda$ with $\hat \lambda$ in the corresponding time intervals. Notice also that~\eqref{eqn:tvdpp_ineq2} still holds with $\hat u$ and $\hat \lambda$, therefore, we have
\begin{align}\label{eqn:tvdpp_1}
     W(x,t) \geq \tclvf(x,t) -\varepsilon_1 - e^{\gamma \delta}\varepsilon_2 - \varepsilon.
\end{align}
}

{On the other hand, by definition of $W$, $\forall \varepsilon_4 > 0$, $\exists \bar \dmap \in \Dmap_\tinit$ s.t. $\forall \csig \in \cfset_\tinit $, 
\begin{align} \label{eqn:tvdpp_ineq4}
    W(x,t) \leq \max \bigr \{ e^{\gamma \delta} \tclvf (\xi (t+\delta; t,x, u, \bar \lambda [ u]), t+\delta) ,\notag \\ 
    \max_{s\in[t,t+\delta]} e^{\gamma (s-t)} \ell (\xi(s; t,x, u, \bar \lambda [u]))    \bigl \}  + \varepsilon_4.
\end{align}
Similarily, denote $y =\xi (t+\delta; t,x, u, \bar \lambda [ u]), t+\delta) $. By~\eqref{eqn:finite_time_CLVF}, $\forall \varepsilon_5>0$, $\exists \tilde \lambda \in \Dmap_{\tinit +\delta}$, s.t. $\forall \csig \in \cfset_{\tinit+\delta}$, 
\begin{align} \label{eqn:tvdpp_ineq5}
    \tclvf(y,t+\delta) \leq \max_{s\in[t+\delta,0]} e^{\gamma (s-t-\delta)} \ell (\xi(s; t+\delta,y, u, \tilde \lambda [ u])) +\varepsilon_5.
\end{align}
Define the disturbance:
\begin{equation*}
    \hat{\dmap}(s) := 
    \begin{cases}
        \bar \dmap(s)\ &\text{if}\quad t \leq s < t+\delta,\\
        \tilde \dmap(s)\ &\text{if}\quad t+\delta \leq s \leq 0.
    \end{cases}
\end{equation*}
Plugging~\eqref{eqn:tvdpp_ineq5} into~\eqref{eqn:tvdpp_ineq4}, we get
\begin{align*}
    W(x,t) \leq \max \bigr \{ \max_{s\in[t,t+\delta]} e^{\gamma (s-t)} \ell (\xi(s; t,x, u, \bar \lambda [u])),   \\ \max_{s\in[t+\delta,0]} e^{\gamma (s-t)} \ell (\xi(s; t+\delta,y, u, \tilde \lambda [ u])) +e^{\gamma \delta} \varepsilon_5
     \bigl \}  + \varepsilon_4,
\end{align*}
which implies
\begin{align} \label{eqn:tvdpp_ineq6}
    W(x,t) \leq  \max_{s\in[t,0]} e^{\gamma (s-t)} \ell (\xi(s; t,x, u, \hat \lambda [u])) +e^{\gamma \delta} \varepsilon_5 + \varepsilon_4,
\end{align}
Again, by~\eqref{eqn:finite_time_CLVF}, $\forall \dmap \in \Dmap_\tinit$, $\exists \hat u \in \cfset_\tinit$ s.t.
\begin{align} \label{eqn:tvdpp_ineq7}
    \tclvf(x,t) \geq \max_{s\in[t,0]} e^{\gamma (s-t)} \ell (\xi(s; t,x, \hat u, \lambda [\hat u])).
\end{align}
Combining~\eqref{eqn:tvdpp_ineq6} and~\eqref{eqn:tvdpp_ineq7}, we have
\begin{align} \label{eqn:tvdpp_2}
    W(x,t) \leq \tclvf(x,t) +e^{\gamma \delta} \varepsilon_5 + \varepsilon_4.
\end{align}
From~\eqref{eqn:tvdpp_1} and~\eqref{eqn:tvdpp_2}, the proof is done.
}
\end{proof}
\end{theorem}

\begin{theorem}\label{thm:TV-R-CLVF-VI-VS} The  TV-R-CLVF is the unique viscosity solution to the following VI, 
   \begin{equation} \label{eqn:TV-R-CLVF-VI}
     \begin{aligned}
         &\max\biggl\{\loss(x) - V_\gamma (\state,t), \\
         &\hspace{.1cm} D_\tinit V_\gamma+ \max_{\dstb \in \dset } \min_{ \ctrl \in \cset} D_\state V_\gamma \cdot \dyn(\state, \ctrl, \dstb) + \gamma V_\gamma  \biggl\} =0,
    \end{aligned} 
\end{equation} 
with \textbf{terminal} condition $V_\gamma(\state, \thor) = \loss(\state)$. 

    \begin{proof}
        See the Appendix~\ref{App: proof_thrm2}.
    \end{proof}

\end{theorem}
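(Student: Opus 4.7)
The plan is to use the dynamic programming principle of Theorem \ref{thrm:TV-R-CLVF_DPP} to verify both the viscosity subsolution and supersolution properties of $V_\gamma$ with respect to \eqref{eqn:TV-R-CLVF-VI}, and then to invoke a standard comparison principle to establish uniqueness. The terminal condition $V_\gamma(\state,0) = \ell(\state)$ is immediate from \eqref{eqn:finite_time_CLVF} since the supremum over $\tvar \in [0,0]$ collapses to a single evaluation of $\ell$ at $\state$.

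For the subsolution property, I would fix a smooth test function $\phi$ and suppose $V_\gamma - \phi$ attains a local maximum at some $(\state_0,\tinit_0)$ with $V_\gamma(\state_0,\tinit_0) = \phi(\state_0,\tinit_0)$, so $V_\gamma \leq \phi$ on a neighborhood. Both terms inside the outer max in \eqref{eqn:TV-R-CLVF-VI} must be shown non-positive. The first inequality $\ell(\state_0) - \phi(\state_0,\tinit_0) \leq 0$ is obtained by plugging $\tvar = \tinit_0$ into the inner running-cost max in the DPP, which forces $V_\gamma(\state_0,\tinit_0) \geq \ell(\state_0)$ regardless of the strategy. For the Hamiltonian term, I would extract a near-optimal non-anticipative disturbance strategy $\dmap_\delta$ achieving the outer $\max_\lambda$ up to $o(\delta)$, apply the DPP, use $V_\gamma \leq \phi$ near $(\state_0,\tinit_0)$, Taylor-expand $\phi(\xi(\tinit_0+\delta),\tinit_0+\delta)$ and $e^{\gamma\delta}$ to first order in $\delta$, divide by $\delta$, and let $\delta \to 0^+$. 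A measurable-selection argument for the inner $\min_u$ together with the arbitrariness of the disturbance value produced by $\dmap_\delta$ then yields $D_\tinit \phi + \max_{\dstb \in \dset}\min_{\ctrl \in \cset} D_\state \phi \cdot \dyn(\state_0,\ctrl,\dstb) + \gamma \phi \leq 0$ at $(\state_0,\tinit_0)$.

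The supersolution property is handled symmetrically at a local minimum of $V_\gamma - \phi$. One needs the outer max in \eqref{eqn:TV-R-CLVF-VI} to be non-negative, and it suffices to show one of the two terms is $\geq 0$. If $\ell(\state_0) \geq \phi(\state_0,\tinit_0)$, we are done; otherwise, by continuity of $\ell$ and the trajectory-continuity estimate already used in Proposition \ref{prop: TV-R-CLVF_lipschitz}, the running-cost term stays strictly below $\phi(\state_0,\tinit_0)$ on a small time window, so the DPP reduces to $V_\gamma(\state_0,\tinit_0) = \max_{\dmap \in \Dmap}\min_{\ctrl \in \cfset} e^{\gamma \delta} V_\gamma(\xi(\tinit_0+\delta),\tinit_0+\delta)$ for small $\delta$. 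Fixing an arbitrary constant disturbance (equivalently a constant strategy $\dmap$) and reversing the inequalities in the preceding Taylor argument produces the Hamiltonian inequality $\geq 0$.

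Uniqueness follows from a standard comparison principle for HJI variational inequalities, in the spirit of Crandall--Lions, using doubling of variables and a penalization to handle the obstacle $\ell(\state) - V_\gamma(\state,\tinit)$. The Lipschitz continuity of $V_\gamma$ established in Proposition \ref{prop: TV-R-CLVF_lipschitz}, together with the Lipschitz and boundedness assumptions on $\dyn$, supplies the necessary regularity of the Hamiltonian $\max_{\dstb}\min_{\ctrl} p\cdot \dyn(\state,\ctrl,\dstb)$ in $(\state,p)$. The main obstacle I anticipate is the combined handling of the two-component max structure and the non-anticipative game information pattern: in particular, the measurable-selection step that justifies passing from an $\varepsilon$-optimal $\max_\lambda$ choice to a pointwise Hamiltonian inequality, and crafting the penalization so that the obstacle term does not disrupt the standard comparison argument.
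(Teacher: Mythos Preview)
The paper does not actually prove this theorem: immediately after stating Theorems~\ref{thrm:TV-R-CLVF_DPP} and~\ref{thm:TV-R-CLVF-VI-VS} it says ``The proof of the above two Theorems can be obtained analogously following Theorem~2,3 in \cite{choi2021robust}, and is omitted here.'' Your overall plan---derive the viscosity sub- and supersolution properties from the dynamic programming principle of Theorem~\ref{thrm:TV-R-CLVF_DPP} and then invoke a comparison principle for uniqueness---is exactly the standard route and is what the cited reference carries out, so at that level your proposal matches the paper's (deferred) argument.

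There is, however, a concrete slip in how you assign roles to the two players in the sub- versus supersolution steps: you have them interchanged. For the \emph{subsolution} inequality $D_t\phi + \max_{\dstb}\min_{\ctrl} D_x\phi\cdot \dyn + \gamma\phi \leq 0$ at a local maximum of $V_\gamma - \phi$, you must show the bound for \emph{every} $\dstb$; the correct move is to fix an arbitrary constant strategy $\dmap\equiv \dstb$, note that the DPP gives $V_\gamma(\state_0,\tinit_0) \geq \min_{\csig}\max\{\cdots\}$ for this $\dmap$, and extract a near-optimal control $u_\delta$. Conversely, for the \emph{supersolution} inequality at a local minimum you need some $\dstb$ that works against \emph{every} $\ctrl$; there one takes a near-optimal strategy $\dmap_\delta$ achieving the outer $\max_{\dmap}$ up to $o(\delta)$ and tests against an arbitrary constant control. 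With the assignments you wrote, the Taylor expansion combined with the local-extremum inequality $V_\gamma \lessgtr \phi$ produces the Hamiltonian inequality with the wrong sign in each case (e.g., your supersolution step, fixing $\dstb$ and using $V_\gamma\geq\phi$, yields $D_t\phi + D_x\phi\cdot\dyn(\state_0,u_\delta^*,\dstb)+\gamma\phi\leq 0$ for some $u_\delta^*$ and all $\dstb$, which is the subsolution conclusion). The rest of your sketch---the handling of the obstacle term $\ell - V_\gamma$, the terminal condition, and the comparison-principle step---is on target.
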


{The proof of Theorem~\ref{thm:TV-R-CLVF-VI-VS} is analogous to that of Theorem~\ref{thm:CLVF_VI_viscosity_solution}. Since the main focus of this paper is Theorem~\ref{thm:CLVF_VI_viscosity_solution}, we include the proof of Theorem~\ref{thm:TV-R-CLVF-VI-VS} in the Appendix.} It should be noted that the definitions of viscosity solutions for the terminal value problem (TVP) and initial value problem (IVP) are slightly different. For more details, the reader is referred to~\cite{crandall1983viscosity,crandall1984some} for IPV and~\cite{barron1989bellman} for TVP.  

With Theorems~\ref{thrm:TV-R-CLVF_DPP} and~\ref{thm:TV-R-CLVF-VI-VS}, the TV-R-CLVF can be obtained by solving equation~\eqref{eqn:TV-R-CLVF-VI} for $\tvar \in [\tinit,\thor]$. Note that in equation~\eqref{eqn:TV-R-CLVF-VI}, define $H:\mathbb R^n  \times \mathbb {R} ^n \mapsto \mathbb R$, 
\begin{align} \label{eqn:Hamitonian}
    H(x,p) = \max_{\dstb \in \dset } \min_{ \ctrl \in \cset} p \cdot \dyn(\state, \ctrl, \dstb).
\end{align}
$H$ is called the Hamiltonian, and the compactness of $\cset$ and $\dset$ and Lipschitz continuity of $\dyn$ guarantee that $H$ is bounded and Lipschitz continuous with respect to $(x,p)$.


\subsection{R-CLVF} \label{Sec:R-CLVF}
{In this section, we introduce the R-CLVF. We will show one sufficient condition for the existence of the R-CLVF, and show some key properties of it. Finally, we will also show that the R-CLVF is a viscosity solution to a VI. }

\begin{definition}\textbf{(R-CLVF)} Given an open set $D_\gamma \subseteq \mathbb R^n$ and $\gamma > 0$, the function $\clvf : \mathcal D_\gamma \mapsto \mathbb R$ is a R-CLVF if the following limit exists: 
 \begin{align} \label{eqn:infinite_time_CLVF}
   \clvf (\state) = \lim _{\tinit \rightarrow -\infty} \tclvf (\state,\tinit).
\end{align} 
\end{definition}
Notice the domain of the R-CLVF is $\dom$, where $\dom$ can be $\mathbb R^n$ if the limit exists on $\mathbb R^n$, {in contrast, for the TV-R-CLVF, the domain is always $\mathbb{R}^n$}. Also notice that for the R-CLVF, $\gamma$ is strictly greater than 0. The reason is that if $\gamma = 0$, the limit in~\eqref{eqn:infinite_time_CLVF} may exist on a closed set. 
\begin{remark} \label{remark:counter_ex}
    Note that though in Definition 4 from \cite{gong2022constructing}, we claimed the domain is a compact set, this is incorrect. As a result, the proof in Remark 1 in~\cite{gong2022constructing} is also incorrect, and we can show that the convergence is \textbf{not} uniform. To see these, consider a 1D toy example $\dot x = x+u$, where $u \in [-1,1]$ and $\gamma > 0$. Then, for any trajectory starting from $ x \in (-1,1)$, there exists a control signal so that the trajectory reaches $0$ at some finite time, therefore the limit in~\eqref{eqn:infinite_time_CLVF} exists. For trajectories starting from $x = \pm 1$, with best control efforts, they can only stay at the same point, so the limit in~\eqref{eqn:infinite_time_CLVF} does not exist. For trajectories starting from $x>1 $ or $x<-1$, the trajectory will diverge, and the limit in~\eqref{eqn:infinite_time_CLVF} does not exist. By taking feedback 
    \begin{align*}
        u^*(\state) =
            1 \text{ if } x< 0, -1 \text{ if }  x> 0,  0 \text{ if } x = 0,
    \end{align*}
    $\clvf(\state)$ is attained at $t = \min \{0,-\ln\frac{\gamma}{(1+\gamma)(1-|x|)}\}$, and as $x\rightarrow \pm 1$, $t \rightarrow -\infty$, so the convergence is not uniform. Further, we could compute the maximum value is actually given by $|x |$ if $t = 0$ and $\frac{\gamma^\gamma}{(1-|x|)^\gamma(\gamma+1)^\gamma}(1-\frac{\gamma}{1+\gamma})$ if $t = -\ln\frac{\gamma}{(1+\gamma)(1-|x|)}$. Therefore, as $x\rightarrow \pm 1$, $\clvf(\state) \rightarrow \infty$. On the other hand, when $\gamma = 0$, for trajectories starting from $[-1,1]$, they could at least stay at their initial state, and the value is $\clvf(\state) = |\state|$, which is attained at the initial time. Therefore, the convergence is uniform, and the domain is closed. To avoid this situation, we reiterate that for R-CLVF, $\gamma > 0$. 
\end{remark}

{For different choices of $\gamma$, do we obtain completely different R-CLVFs? Although the function values can vary significantly with $\gamma$, we can show that for any $\gamma>0$, the zero sub-level set remains unchanged.} { Denote the zero sub-level sets of TV-R-CLVF and R-CLVF as \begin{align*} 
    \mathcal Z_\gamma(\tinit):=\{ \state: \tclvf (\state,\tinit) \leq 0 \}, \hspace{1em}
    \mathcal Z_\gamma^\infty:=\{ \state: \clvf ( \state) \leq 0 \}.
\end{align*}}
{\begin{lemma} \label{lemma:same_zero_level}
    For all $\gamma > 0$, $\mathcal Z_\gamma(\tinit)$ are the same. Further, $\mathcal Z_\gamma^\infty$ are also the same and $\mathcal Z_\gamma^\infty = \mrcis$. 
    \begin{proof}
        We only prove the first statement, as the second statement can be proved with the same process, and is an easy extension of the first statement. 
Given any $\state$, assume $V_{\gamma_1}(\state,\tinit) < 0$ and $V_{\gamma_2}(\state,\tinit) > 0$. Since $e^{\gamma_1 (\tvar - \tinit)} >0$, $\forall \dmap$, there must exist $\ctrl _1(\cdot)$ s.t. 
\begin{align*}
    \ell(\xi(t_1;\tinit,\state,\ctrl_1(\cdot),\dmap [\ctrl_1]) < 0 
\end{align*}
for some $t_1$. On the other hand, since $V_{\gamma_2}(\state,\tinit) > 0$ and $e^{\gamma_2 (\tvar - \tinit)} >0$, there exists $\bar \dmap $ and for all $\csig$ s.t. 
\begin{align*}
    \ell(\xi(t_1;\tinit,\state,\ctrl(\cdot),\bar \dmap [\ctrl]) \geq 0 
\end{align*}
for all $\tvar \in [\tinit,\thor]$. However, applying $\ctrl _1(\cdot)$ gives 
\begin{align*}
    \ell(\xi(t_2;\tinit,\state,\ctrl_1(\cdot),\bar \dmap [\ctrl_1]) < 0 
\end{align*}
for some $t_2$, which is a contradiction. Therefore, $V_{\gamma_1}(\state,t) < 0$ implies $V_{\gamma_2}(\state,t) \leq 0 $. Switching $\gamma_1$ and $\gamma_2$, we get the same result, and therefore we have $V_{\gamma_1}(\state,t) \leq 0 $ if and only if $V_{\gamma_1}(\state,t) \leq 0 $. With the same process, we could also show that $V^\infty_{\gamma_1}(\state,t) \leq 0 $ if and only if $V^\infty_{\gamma_1}(\state,t) \leq 0 $. Further, the above inequalities also hold when either $\gamma_1 $ or $\gamma_2$ is 0, and when $\gamma = 0$, in the infinite-time horizon, the zero sub-level set is the SRCIS. Therefore, we have proved $\mathcal Z_{\gamma}^\infty = \mrcis$.

    \end{proof}
\end{lemma}}

{This lemma links the R-CLVF to the SRCIS by showing that the SRCIS is exactly the zero sublevel set of the R-CLVF. Since our goal is to prove that an R-CLVF implies exponential stabilizability to the SRCIS with rate $\gamma$, it is essential that the R-CLVF encode the SRCIS in a $\gamma$-independent way. In particular, although the SRCIS itself does not depend on $\gamma$, the R-CLVF values do. We therefore establish that for any $\gamma > 0$, the R-CLVF shares the same zero sublevel set, and that set is exactly the SRCIS.} 

{Since the R-CLVF is defined as the limit function of TV-R-CLVF, it is natural to ask under what conditions the limit exists. Consider an initial state $x$ in the SRCIS, then $\loss(\state) < 0$. Then, for its value to be $0$, $\loss(\traj)$ has to converge to $0$ with an exponential rate $\gamma$, which cannot be guaranteed. Therefore, we have $\clvf(x)\leq 0$ for all $x\in \mrcis$, regardless of $\gamma$. This means the limit always exists in the SRCIS. }

{For the states outside $\mrcis$, the value can be infinity because of the exponential amplifier $\gamma$. The next Lemma shows that the exponential stabilizability to SRCIS is a sufficient condition for the existence of the R-CLVF. }

\begin{lemma}[Existence of R-CLVF] \label{lemma:CLVF_exists}
    {The R-CLVF exists on an open set $\dom$ (or $\mathbb R^n$) if the system is exponentially stabilizable (under rate $\gamma$) to its SRCIS from $\roes$ (or $\mathbb R^n$), despite worst-case disturbance. Further $\roes \subset \dom$. }
    
\begin{proof}
    Assume the system is exponentially stabilizable to the SRCIS. Using Definition \ref{def:ROES}, we have $\forall \dmap \in \Dmap $, $\exists u^*(\cdot) \in \cfset$, $\exists c > 0$ s.t. 
    \begin{align*}
        &dst(\opttraj;\mrcis) \leq c e^{-\gamma (\tvar-\tinit)} dst(\state;\mrcis). 
    \end{align*}
    Plugging in equation~\eqref{eqn:dst_to_set}, 
    \begin{align} \label{eqn:dst1}
        & \min _ {a \in \partial \mrcis}\|\opttraj-a\| \notag \\
        & \hspace{6em} \leq c e^{-\gamma (\tvar-\tinit)} \min _ {a \in \partial \mrcis}\|\state-a\|. 
    \end{align}
    Plugging in $\loss(\state) = \hjloss (\state) - \minval = \|\state\| -\minval$, we have
        \begin{align*}
         &\loss(\opttraj)  \\
         = & \hjloss(\opttraj) - \minval , \\
         = & \|\opttraj\| - \max _{a \in \partial \mrcis} \|a\|,  \\
         \leq & \|\opttraj\| - \min _{a \in \partial \mrcis} \|a\|,
         \end{align*}
         and 
         \begin{align}  \label{eqn:dst2}
         & \|\opttraj\| - \min _{a \in \partial \mrcis} \|a\|, \notag \\
         = &  \min _{a \in \partial \mrcis} \bigl( \|\opttraj\| -  \|a\|  \bigl) ,\notag \\
         \leq &  \min _{a \in \partial \mrcis} \bigl( \|\opttraj - a\|  \bigl), \notag \\
         \leq &  c e^{-\gamma (\tvar-\tinit)} \min _ {a \in \partial \mrcis}\|\state-a\|,
    \end{align}
    where we used equation~\eqref{eqn:dst1} for the last inequality. 
    Multiplying $e^{\gamma (s-t)}$ on both side
    \begin{align*}
        &e^{\gamma (s-t)} \bigl( \loss(\opttraj)  \bigl) \\ \leq & e^{\gamma (s-t)}  c e^{-\gamma (\tvar-\tinit)} \min _ {a \in \partial \mrcis}\|\state-a\|= c\min _ {a \in \partial \mrcis}\|\state-a\|, 
    \end{align*}
    which holds for all $\tvar \in [\tinit, \thor]$. Therefore
    \begin{align*}
        V_\gamma(\state,\tinit) = &\max_{\tvar \in [\tinit, \thor]}e^{\gamma (s-t)} \left( \loss(\opttraj) \right), \\
        \leq & c \min _ {a \in \partial \mrcis}\|\state-a\|.
    \end{align*}
    This upper bound $c \min _ {a \in \partial \mrcis}\|\state-a\|$ is independent of $\tinit$, therefore as $\tinit \rightarrow -\infty$, we have $\clvf (\state) \leq c\min _ {a \in \partial \mrcis}\|\state-a\|$. Since the R-CLVF monotonically increases, we conclude that the limit in \eqref{eqn:infinite_time_CLVF} exists $\forall \state \in \roes$. {This also implies $\roes \subset \dom$.}
    
\end{proof}
\end{lemma}

{This Lemma shows that for nonlinear systems, the existence of the R-CLVF can be justified by analyzing the system dynamics. This sufficient condition is intuitive.} The R-CLVF value of a state $\state$ captures the maximum exponentially amplified distance between the optimal trajectory (starting from this state) and the zero-level set of $\loss(x)$. For this value to be finite, the optimal trajectory must converge to the SRCIS under an exponential rate no slower than $\gamma$. {In Section~\ref{Sec:R-CLVF-EXP}, we will show that this is also a necessary condition. }

\begin{proposition} \label{Prop:R-CLVF-lipschitz}
    The R-CLVF is locally Lipschitz continuous for all $x \in \dom$, if $\gamma \leq  \alpha$, where $\alpha$ is the fastest rate of exponential stabilizability of the system. 
    
    \begin{proof}
        See the Appendix~\ref{App: proof_prop2} .
    \end{proof}
\end{proposition}

Further, it {can be seen} that if the domain $\dom$ is $\mathbb R^n$, then R-CLVF is radially unbounded, i.e. $\lim_{\| \state \| \rightarrow \infty} \clvf(\state) = \infty$. This is because the R-CLVF is lower bounded by a radially unbounded function $\loss$. When the domain is an open subset of $\mathbb R^n$, we have $\lim_{\state  \rightarrow \partial \dom} \clvf(\state) = \infty$. 

\begin{proposition} \label{prop: Radially_unbounded}
When $\dom $ is an open subset of $ \mathbb R^n$, $\lim_{\state  \rightarrow \partial \dom} \clvf(\state) = \infty$. 
    
    \begin{proof}
        See the Appendix~\ref{App: proof_prop3}.
    \end{proof}
\end{proposition}

We now show that the R-CLVF satisfies the DPP, and is the viscosity solution to a VI. These two Theorems are crucial for the proof of later results. The proofs of these two Theorems are considered the main contributions of the paper and provide a means for constructing the R-CLVF for general nonlinear systems with {compact} control and disturbance.

\begin{theorem} \label{thm:CLVF_dynamic_programming_principle}
\textbf{(R-CLVF-DPP)} For all $ \tinit \leq \tinit +\delta \leq 0$, the following is satisfied 
\begin{align}  \label{eqn:CLVF_DPP} 
   \clvf(\state) = \sup _{\dmap \in \Dmap_\tinit} \inf _{u \in \mathbb{U}} \max \biggl\{e^{\gamma \delta} V^\infty_\gamma(z), \hspace{5mm} \notag \\
    \max_{ \tvar \in [\tinit,\tinit+\delta]} e^{\gamma (\tvar-\tinit)} \loss(\xi(\tvar ;\tinit,x,u,\dmap [u])) \biggl\}.
\end{align}  
\begin{proof}
Denote the right-hand side of equation~\eqref{eqn:CLVF_DPP} as $W(\state)$. From the definition of R-CLVF, $\forall \varepsilon > 0$, $\state \in \dom$, $\exists \tinit \leq 0$ s.t.
\begin{align} \label{eqn:DPP_init}
    V_\gamma(\state,\tinit) \leq \clvf (\state) \leq V_\gamma (\state,\tinit) + \varepsilon.
\end{align}
From Theorem \ref{thrm:TV-R-CLVF_DPP}, $\forall \tinit < \tinit + \delta \leq 0$ and $\forall \state \in \dom$, define $z = \xi(\tinit+\delta;\tinit,\state,\csig,\dmap[\ctrl](\cdot))$, we have: 
\begin{align}\label{eqn:DPP_left}
     &\clvf(\state) \leq V_\gamma (\state , \tinit)  + \varepsilon \notag 
    = \sup _{\dmap \in \Dmap} \inf _{u \in \mathbb{U}} \max \biggl\{ \\ &e^{\gamma \delta} V_\gamma(\xi(t+\delta),t+\delta), \notag 
    \max_{s\in[t, t+\delta]} e^{\gamma (s-t)} \loss(\xi(s)) \biggl\}  + \varepsilon, \notag \\
    \leq &   \sup _{\dmap \in \Dmap} \inf _{u \in \mathbb{U}} \max \biggl\{e^{\gamma \delta} \clvf (z) , \notag \\  &\hspace{15mm}
    \max_{s\in[t, t+\delta]} e^{\gamma (s-t)} \loss(\xi(s)) \biggl\}  + \varepsilon 
    = W(\state) +\varepsilon.
\end{align}

On the other hand, using inequality~\eqref{eqn:DPP_init} and Theorem \ref{thrm:TV-R-CLVF_DPP}, $\forall \varepsilon >0$, $\forall \tinit < \tinit + \delta \leq 0$ and $\forall \state \in \dom$ we have: 
\begin{align}\label{eqn:DPP_right}
     &\clvf(\state) \geq V_\gamma (\state , \tinit)   \notag 
    = \sup _{\dmap \in \Dmap} \inf _{u \in \mathbb{U}} \max \biggl\{\\ &e^{\gamma \delta} V_\gamma(\xi(t+\delta),t+\delta), \notag 
    \max_{s\in[t, t+\delta]} e^{\gamma (s-t)} \loss(\xi(s)) \biggl\} , \notag \\
    \geq &   \sup _{\dmap \in \Dmap} \inf _{u \in \mathbb{U}} \max \biggl\{e^{\gamma \delta} (\clvf (z) - \varepsilon), \notag \\  &\hspace{25mm}
    \max_{s\in[t, t+\delta]} e^{\gamma (s-t)} \loss(\xi(s)) \biggl\}  , \notag \\
    \geq &  \sup _{\dmap \in \Dmap} \inf _{u \in \mathbb{U}} \max \biggl\{e^{\gamma \delta} \clvf (z) , \notag \\  &\hspace{5mm}
    \max_{s\in[t, t+\delta]} e^{\gamma (s-t)} \loss(\xi(s)) \biggl\}  - e^{\gamma \delta} \varepsilon
     = W(\state) - e^{\gamma \delta} \varepsilon.
\end{align}

Combining equation~\eqref{eqn:DPP_left} and~\eqref{eqn:DPP_right}, we show $\forall \varepsilon >0$, $\forall \tinit < \tinit + \delta \leq 0$ and $\forall \state \in \dom$:
\begin{align*}
      W(\state) -e^{\gamma \delta}\varepsilon \leq \clvf(\state) \leq W(\state) +\varepsilon,
\end{align*}
which completes the proof.

\end{proof}

\end{theorem} 

\begin{theorem}\label{thm:CLVF_VI_viscosity_solution} \textbf{(R-CLVF-VI)} The R-CLVF is {a} solution to the following R-CLVF-VI in the viscosity sense, 
\begin{equation} \label{eqn:CLVF-VI}
     \begin{aligned}
         &\max\biggl\{\loss(x) - \clvf(x), \\
         &\hspace{.1cm} \max_{d \in \dset} \min_{ u \in \cset} D_x \clvf \cdot \dyn(\state, \ctrl, \dstb) + \gamma \clvf(\state) \biggl\} =0.
    \end{aligned} 
\end{equation}   
\begin{proof}
First, define $\mathcal F(\state,v,p): \dom \times \mathbb R \times \mathbb R^n \mapsto \mathbb R$
\begin{align*}
    \mathcal F(\state,v,p) = \max \{ \loss(\state) - v, \hspace{.5em} H(\state,v,p) \},
\end{align*}
then the R-CLVF-VI can be written as 
\begin{align*}
    \mathcal F(\state,\clvf(\state), D_x \clvf(\state)) = 0.
\end{align*}
Following~\cite{barron1989bellman}, 
\begin{enumerate}
    \item $\clvf $ is a viscosity \textbf{sub-}solution of~\eqref{eqn:CLVF-VI}, if for any $\phi \in C^1(\dom)$, and $x$ is a local maxima for $V_{\gamma}^\infty - \phi$, 
    \begin{align}\label{eqn:VS_sub}
        \mathcal F(\state,\clvf(\state), D_x \phi (\state)) \geq 0.
    \end{align}
     \item $\clvf$ is a viscosity \textbf{super-}solution of~\eqref{eqn:CLVF-VI}, if for any $\psi \in C^1(\dom)$, and $x$ is a local minima for $V_{\gamma}^\infty - \psi$, 
    \begin{align}\label{eqn:VS_super}
        \mathcal F(\state,\clvf(\state), D_x \psi (\state)) \leq 0.
    \end{align}
\end{enumerate}

{Without loss of generality,} we could always assume $\state$ is a strict local maxima (minima), and the maximum (minimum) value is $0$, i.e., $\clvf(\state) - \phi(\state) = 0$ ($\clvf(\state) - \psi(\state) = 0$). Here, since the R-CLVF is the limit function of TV-R-CLVF, we use the definition for the TVP. 

We start with the sub-solution. Assume~\eqref{eqn:VS_sub} is wrong, i.e., 
\begin{align*}
        \mathcal F(\state,\clvf(\state), D_x \phi (\state)) < 0.
\end{align*}
Then there exists $\theta_1, \theta_2 > 0$ s.t. both the followings hold 
\begin{align}
    &\loss(\state) - \clvf(\state) \leq -\theta_1 < 0, \label{eqn:VS_sub1} \\
    & \max _{\dstb \in \dset} \min _{ \ctrl \in \cset} D_x \phi(x) \cdot \dyn(\state, \ctrl, \dstb) + \gamma V_{\gamma}^\infty(x) \leq -\theta_2 < 0. \label{eqn:VS_sub2}
\end{align}
By continuity of $\loss$ and $\xi$, there exists $\delta_1 > 0$ s.t. for any $ \theta_1 > 0$, $\csig, \dmap$, $\tvar \in [\tinit, \tinit + \delta_1]$ 
\begin{align} \label{eqn:VS_l-V}
    | e^{\gamma (\tvar - \tinit)} \loss (\trajmap) - \loss (\state) | \leq \frac{\theta_1}{2}.
\end{align}
Combined with~\eqref{eqn:VS_sub1}, we have
\begin{align} \label{eqn:VS_sub3}
    e^{\gamma (\tvar - \tinit)} \loss (\trajmap) \leq  \loss (\state) + \frac{\theta_1}{2} 
    \leq  \clvf(\state) -\frac{\theta_1}{2}.
\end{align}
Further, since $\clvf(\state) = \phi (\state)$, ~\eqref{eqn:VS_sub2} can be written as
\begin{align*}
     \max _{\dstb \in \dset} \min _{ \ctrl \in \cset} D_x \phi(x) \cdot \dyn(\state, \ctrl, \dstb) + \gamma \phi(\state)  \leq -\theta_2.
\end{align*}
and for any $ \dstb \in \dset$,
\begin{align*}
     \min _{ \ctrl \in \cset} D_x \phi(x) \cdot \dyn(\state, \ctrl, \dstb) + \gamma \phi(\state)  \leq -\theta_2.
\end{align*}
Since $\phi \in C^1$, there exists $\delta > 0, \bar \ctrl \in \cset$, $\tvar \in [\tinit, \tinit + \delta]$ s.t. 
\begin{align*}
      D_x \phi(\xi(\tvar)) \cdot \dyn(\xi(\tvar), \bar \ctrl(\tvar), \dmap[ \bar \ctrl](\tvar)) + \gamma \phi(\state(\tvar))  \leq -\frac{\theta_2}{2}.
\end{align*}
Multiplying $e^{\gamma(\tvar-\tinit)}$ on both side and integrating on both side for $\tvar \in [\tinit, \tinit+\delta]$, we get:
\begin{align*}
    e^{\gamma \delta} \phi (\xi(\tinit+\delta;x,t,\bar \ctrl (\cdot), \dmap[\bar \ctrl])) -\phi(\state) \leq -\frac{\theta_2 (e^{\gamma \delta} - 1)}{2\gamma}.
\end{align*}
Since $\clvf- \phi $ has local maxima at $\state$ and the maximum value is $0$, we have
\begin{align}
    &e^{\gamma \delta} \clvf (\xi(t+\delta;x,t,\bar \ctrl (\cdot), \dmap[\bar \ctrl]))  \notag \\ 
    \leq  &\phi(\state)  -\frac{\theta_2 (e^{\gamma \delta} - 1)}{2\gamma}
    =  \clvf (\state)  -\frac{\theta_2 (e^{\gamma \delta} - 1)}{2\gamma}.  \label{eqn:VS_sub4}
\end{align}
Combining~\eqref{eqn:VS_sub3} and~\eqref{eqn:VS_sub4}, and because $d$ is arbitrary, we have
\begin{align*}
     &\clvf (\state)  - \min \bigl\{ \frac{\theta_1}{2}, \frac{\theta_2 (e^{\gamma \delta} - 1)}{2\gamma} \bigr\} \\
      \geq & \sup_{\dmap} \max \bigl\{ e^{\gamma (\tvar - \tinit)} \loss (\trajmap), \\
      & \hspace{6em} e^{\gamma \delta} \clvf (\xi(t+\delta;x,t,\bar \ctrl (\cdot), \dmap[\bar \ctrl])) \bigr\} \\
      \geq & \clvf(\state),
\end{align*}
where the last inequality is from the DPP~\eqref{eqn:CLVF_DPP}. This is clearly a contradiction, so~\eqref{eqn:VS_sub} must hold, i.e., $\clvf$ is a viscosity sub-solution.

Now, we examine the super-solution. Assume~\eqref{eqn:VS_super} is wrong, i.e., 
\begin{align*}
        \mathcal F(\state,\clvf(\state), D_x \psi (\state)) > 0.
\end{align*}
Then $\exists \theta_1, \theta_2 > 0$ s.t. one of the followings hold 
\begin{align}
    &\loss(\state) - \clvf(\state) \geq \theta_1 > 0, \label{eqn:VS_super1} \\
    & \max _{\dstb \in \dset} \min _{ \ctrl \in \cset} D_x \psi(x) \cdot \dyn(\state, \ctrl, \dstb) + \gamma V_{\gamma}^\infty(x) \geq \theta_2 > 0. \label{eqn:VS_super2}
\end{align}
If~\eqref{eqn:VS_super1} holds, then from~\eqref{eqn:VS_l-V}, there exists $\delta _1 >0$ s.t. for any $\theta_1>0, \csig, \dmap$ and $\tvar \in [\tinit, \tinit+\delta _1]$
\begin{align*} 
    e^{\gamma (\tvar - \tinit)} \loss (\trajmap) \geq  \loss (\state) - \frac{\theta_1}{2} 
    \geq  \clvf(\state) +\frac{\theta_1}{2}.
\end{align*}
Plugging in the DPP~\eqref{eqn:CLVF_DPP}, we have 
\begin{align*}
    \clvf(\state) \geq &\sup_{\dmap} \inf_{\csig} \max_{s\in[\tinit, \tinit+ \delta]}e^{\gamma (\tvar - \tinit)} \loss (\trajmap) \\
    \geq& \clvf(\state) + \frac{\theta_1}{2},
\end{align*}
which is a contradiction. Therefore~\eqref{eqn:VS_super1} cannot be true. 

If~\eqref{eqn:VS_super2} holds, then from the same derivation of~\eqref{eqn:VS_sub4}, there exists $\delta_2 > 0$, $\exists \dmap, \forall \csig$, s.t. $\forall \tvar \in [\tinit, \tinit+\delta_2]$
\begin{align*}
    &e^{\gamma \delta} \clvf (\xi(t+\delta;x,t,\bar \ctrl (\cdot), \dmap[\bar \ctrl]))   \\ 
    \geq  &\psi(\state)  +\frac{\theta_2 (e^{\gamma \delta} - 1)}{2\gamma}
    =  \clvf (\state)  +\frac{\theta_2 (e^{\gamma \delta} - 1)}{2\gamma}.  
\end{align*}
Again plugging in DPP~\eqref{eqn:CLVF_DPP}, we have
\begin{align*}
    \clvf(\state) \geq &\sup_{\dmap} \inf_{\csig} e^{\gamma \delta} \clvf (\xi(t+\delta;x,t,\bar \ctrl (\cdot), \dmap[\bar \ctrl])),  \\
    \geq& \clvf(\state) + \frac{\theta_2 (e^{\gamma \delta} - 1)}{2\gamma},
\end{align*}
which is a contradiction. Therefore~\eqref{eqn:VS_super2} cannot be true. Combined,~\eqref{eqn:VS_super} must hold, so $\clvf$ is a viscosity super-solution. 

{We do not have the uniqueness guarantee for two reasons: 1) we do not specify any boundary conditions, and 2) the spatial growth of the R-CLVF is not bounded, as shown in Proposition~\ref{prop: Radially_unbounded}. Further, as will be seen in the next paragraph, it is not trivial to specify the boundary condition. }

\end{proof}

\end{theorem}

However, it should be noted that the R-CLVF-VI~\eqref{eqn:CLVF-VI} may have multiple solutions given different choices of $\gamma$. To see this, let's see the 1D system $\dot x = u$, where $u \in [-2,2]$. It is not hard to check that for all $x> 0$, the optimal control is $u^* = -2$ and for all $x<0$, $u^* = -2$. Therefore, we could easily compute the value function as 
\begin{equation}\label{eqn:VS_example}
        \clvf(x) = \begin{cases}
        \frac{2}{\gamma} e^{ \frac{\gamma |x| -2}{2}}  & |x|>\frac{2}{\gamma}\\
        |x| & |x|\leq \frac{2}{\gamma}
    \end{cases},
\end{equation}
with gradient 
\begin{align*}
    \frac{d\,\clvf}{dx} = \begin{cases}
        sign(x)\cdot e^{\frac{-\gamma |x|-2}{2}} & |x|>\frac{2}{\gamma} \\
        sign(x) & | x| \leq \frac{2}{\gamma}  
    \end{cases}.
\end{align*}
The only non-differentiable point is $x = 0$, with subdifferential $p^- \in [-1,1]$. It can be checked that the value function statsifies~\eqref{eqn:CLVF-VI} in the viscosity sense. However,  
\begin{equation}\label{eqn:counter_example}
    U(x) = \begin{cases}
        e^{\frac{ \gamma (|x| -b )}{2}}  & |x|>a\\
        |x| & |x|\leq a
        \end{cases}.
\end{equation}
is also one viscosity solution for~\eqref{eqn:CLVF-VI}, with any $a\gamma = 2$, $b = \frac{2-2\ln {a}}{\gamma}$. {Also, $U$ also satisfies that $U(x) = 0$ at SRCIS, and $U(x) \rightarrow \infty$ as $x \rightarrow \infty$. This means specifying boundary conditions on the SRCIS or on the boundary of $\dom$ cannot enforce the uniqueness of the viscosity solution.} The R-CLVF~\eqref{eqn:VS_example} is the viscosity solution for any $\gamma$. 

{This means that if we directly solve the R-CLVF-VI, we might not get the R-CLVF as desired. However, this is not a problem both in theory and for the numerical computation. In theory,  implies Proposition~\ref{prop:Vdot<=-gammaV}, which is vital to prove one main result (Theorem~\ref{thrm:CLVF_finite_eq_EC}). For numerical computation, we build up the solver based on the Level-set Toolbox~\cite{ian2005levelset}, which is only applicable to time-varying PDEs (like equation~\eqref{eqn:HJI-VI} and~\eqref{eqn:TV-R-CLVF-VI}). More specifically, we discretize the state space, solve~\eqref{eqn:TV-R-CLVF-VI} and backpropagate using DP until convergence to obtain~\eqref{eqn:infinite_time_CLVF}, instead of directly solving~\eqref{eqn:CLVF-VI}.}



Though uniqueness is not guaranteed, we can still provide the following result from Theorem~\ref{thm:CLVF_VI_viscosity_solution}.
\begin{proposition} \label{prop:Vdot<=-gammaV} 
At any state (differentiable or non-differentiable) in the domain $\dom$ of the R-CLVF, $\forall \dstb \in \dset$, there exists some control $\ctrl \in \cset$ such that  
 \begin{align} \label{eqn:Vdot<=-gammaV}
    \max _{\dstb \in \dset} \min _{\ctrl \in \cset}\dot V^\infty_\gamma \leq -\gamma \clvf.
\end{align} 

\begin{proof}
    See the Appendix~\ref{App: proof_prop4}
\end{proof}

\end{proposition}
 This Proposition is vital for the proof of Lemma~\ref{lemma:Exp_stable}. {Also note that in the proof, the super-differential and sub-differential are used for non-differentiable points.} 

\subsection{R-CLVF implies Exponential Stabilizability} \label{Sec:R-CLVF-EXP}
{As we have already shown that one sufficient condition of the existence of the R-CLVF is the exponential stabilizability in Lemma~\ref{lemma:CLVF_exists}, we will now show that this is also a necessary condition.}



\begin{lemma} \label{lemma:Exp_stable}
    The system can be exponentially stabilized to its smallest robustly control invariant set $\mrcis$ from $\dom \setminus \text{interior}(\mrcis)$ (or $\mathbb R^n  \setminus \text{interior}(\mrcis$)), if the R-CLVF exists in $\dom$ (or $\mathbb R^n$). {Further, $\dom \subset \roes$.}
    
    \begin{proof}
Assume the limit in (\ref{eqn:infinite_time_CLVF}) exists in $\dom$. For any initial state $\state \in \dom \setminus \mrcis$, 
from Proposition \ref{prop:Vdot<=-gammaV}, {because $\cset$ is compact, there exists $u^*$ s.t. for all $d$,}
 \begin{align*}
     D_x \clvf(\state) \cdot f(\state, \ctrl^* ,\dstb) = \dot V_{\gamma}^\infty  \leq - \gamma V^\infty_{\gamma}.
\end{align*} 
Using the comparison principle, we have $\forall s \in [t,0]$,
 \begin{align} \label{eqn:EX1}
     \clvf \big( \opttraj \big)\leq e^{-\gamma (\tvar-\tinit) } \clvf (\state),
\end{align} 
{where $\csigopt$ is constructed by $u^*$ and $\dmap$ is arbitrary. Consider an arbitrary compact subset of $\dom $, denoted as $\mathcal S_\gamma$. Define
\begin{align*}
    \phi(x) : = \frac{\clvf(x)}{\text{dst}(x;\mrcis)}.
\end{align*}
Since $\mathcal S_\gamma$ is compact and both $\clvf$ and $dst(x;\mrcis)$ are continuous in $x$, $\phi$ attains its maximum and minimum in $\mathcal S_\gamma$. This means there exists $c_1>0$ and $c_2>0$ s.t.
\begin{align*}
    c_1\text{dst}(x;\mrcis) \leq \clvf(x) \leq c_2 \text{dst}(x;\mrcis).
\end{align*}
Combined with~\eqref{eqn:EX1}, we have
\begin{align*}
    &\text{dst}(\opttraj; \mrcis), \\
    \leq &\frac{1}{c_1} \clvf(\opttraj), \\
    \leq &\frac{1}{c_1}  e^{-\gamma (\tvar-\tinit) } \clvf (\state), \\
    \leq & \frac{c_2}{c_1}  e^{-\gamma (\tvar-\tinit) } {\text{dst}(x;\mrcis)}.
\end{align*}
Therefore, $\mathcal S_\gamma$ is a subset of $\roes$. Further, since $\mathcal S_\gamma$ is an arbitrary compact subset of $\dom$, we conclude that $\dom \subset \roes$.}

\end{proof}
\end{lemma}

This means for a complex nonlinear system, we could find its SRCIS, and check whether (and from where) it can be exponentially stabilized to its SRCIS by computing the R-CLVF of it. More specifically, it finds the maximum region, from where the system can be stabilized to its SRCIS under the user-specified exponential rate $\gamma$, despite worst-case disturbance. 

Combining Lemma~\ref{lemma:CLVF_exists} and Lemma \ref{lemma:Exp_stable}, we directly have the following theorem. 
\begin{theorem} \label{thrm:CLVF_finite_eq_EC} 
The system can be exponentially stabilized to its SRCIS $\mrcis$ from $\dom \setminus \mrcis$ (or $\mathbb R^n  \setminus \mrcis$), if and only if the R-CLVF exists in $\dom$ (or $\mathbb R^n$).  
\end{theorem}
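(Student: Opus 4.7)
The plan is to observe that this theorem is an immediate packaging of the two lemmas already proved in the preceding subsection, so the proof is essentially a two-line argument that assembles them. Concretely, I would state the biconditional and break it into the two implications, citing Lemma \ref{lemma:CLVF_exists} for one direction and Lemma \ref{lemma:Exp_stable} for the other.

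First, for the forward direction ($\Rightarrow$), I would assume the system is exponentially stabilizable to $\mrcis$ from $\dom\setminus\mrcis$. By the definition of the ROES (Definition \ref{def:ROES}), this set $\dom\setminus\mrcis$ is contained in $\roes$, so Lemma \ref{lemma:CLVF_exists} applies: the limit in \eqref{eqn:infinite_time_CLVF} exists uniformly on $\dom$, and in fact $\dom=\roes$. Thus the R-CLVF is well defined on $\dom$ (and analogously on $\mathbb R^n$ if the stabilizability is global).

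Second, for the reverse direction ($\Leftarrow$), I would assume the R-CLVF exists on $\dom$ (respectively on $\mathbb R^n$). Then Lemma \ref{lemma:Exp_stable} yields the existence of an exponential rate $\gamma$ and constants $k_1,k_2>0$ such that along the optimal trajectory
\begin{align*}
    dst\bigl(\opttraj;\mrcis\bigr) \leq (k_1+k_2)e^{-\gamma(\tvar-\tinit)} dst(\state;\mrcis)
\end{align*}
for every $\state\in\dom\setminus\mrcis$ and every non-anticipative disturbance strategy $\dmap\in\Dmap$. This is exactly the exponential stabilization property required by Definition \ref{def:ROES}.

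Since neither implication requires any new machinery beyond the two lemmas, I do not anticipate a significant obstacle; the only subtlety is to make clear that the domain identification $\dom=\roes$ (given by Lemma \ref{lemma:CLVF_exists}) is what lets us interchange the two descriptions of the region in the statement of the theorem, and that the global case follows by taking $\dom=\mathbb R^n$ in both lemmas. I would therefore close the proof simply by noting that combining Lemma \ref{lemma:CLVF_exists} and Lemma \ref{lemma:Exp_stable} yields the equivalence.
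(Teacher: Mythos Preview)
Your proposal is correct and mirrors the paper's own treatment exactly: the paper does not even open a proof environment for this theorem, but simply prefaces it with ``Combining Lemma~\ref{lemma:CLVF_exists} and Lemma~\ref{lemma:Exp_stable}, we provide the following theorem.'' Your write-up just makes that packaging explicit, with the forward implication from Lemma~\ref{lemma:CLVF_exists} and the reverse from Lemma~\ref{lemma:Exp_stable}, together with the domain identification $\dom=\roes$.
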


This Theorem extends the classic `CLF' results that stabilize systems to the origin in two ways. First, it is applicable to a broader class of systems (i.e., systems with no equilibrium points). Second, it guarantees the exponential rate, which is specified by the user.  

{\begin{remark} \label{remark:tradeoff}
    From \eqref{eqn:finite_time_CLVF} and \eqref{eqn:infinite_time_CLVF}, it can be seen that if $\gamma _1 > \gamma _2$, then {$V^\infty_{\gamma_1} \geq V^\infty_{\gamma_2}$}. Assume their corresponding domain is $\mathcal D_{\gamma_1}$ and $\mathcal D_{\gamma _2}$, we have $\mathcal D_{\gamma_1} \subseteq \mathcal D_{\gamma_2}$. From Theorem \ref{thrm:CLVF_finite_eq_EC}, we conclude that a larger $\gamma$ corresponds to a faster convergence rate, while a smaller ROES. The user can trade off between a faster convergence rate and a larger ROES. 
\end{remark}}

{There is still one question to be answered: given that the R-CLVF with $\gamma$ exists on $\dom$, is $\gamma$ the fastest exponential rate of convergence? Revisting the example used in Remark.~\ref{remark:counter_ex}, we could see that if there exists a control signal for all possible disturbance strategies s.t. the trajectory can \textbf{reach} the $\mrcis$ in finite time (meaning $\traj \in \mrcis$ at some $\tvar$), the R-CLVF value at that state will be finite for all $\gamma >0$. Further, if this is the case for all states in an open set, then the R-CLVF exists for all $\gamma>0$ on that open set. This is what happened in that 1D example. }

{Further, in practice, we typically do not know a priori whether the system is exponentially stabilizable. Consequently, a line search over $\gamma$ may be needed. In this case, it is not necessary to run the computation over the entire discretized grid. Instead, one can construct a smaller grid containing a subset of the SRCIS and perform the computation on this reduced domain, which can significantly decrease the required computational resources. }

\subsection{R-CLVF-QP}\label{Sec:RCLVF-QP}

For a control and disturbance-affine system,
\begin{equation} \label{eqn:control_affine_system}
    \dot \state = \dyn \big( \state ,\ctrl , \dstb \big) = f_x(\state) + g_u(\state)\ctrl + g_d(\state)\dstb,
\end{equation} 
where $f_x:\mathbb R^n \rightarrow \mathbb R^n$, $g_u:\mathbb R^n \rightarrow \mathbb R^{n\times m_u}$, $g_d:\mathbb R^n \rightarrow \mathbb R^{n\times m_d}$. 
Then, \eqref{eqn:Vdot<=-gammaV} is equivalent to a linear inequality in $u$:
\begin{align*}
     &D_x \clvf(\state) \cdot f_x(\state) + \min _{\ctrl \in \cset} D_x \clvf (\state) \cdot g_u(\state) \ctrl \\
     &\hspace{5em} + \max _{\dstb \in \dset} D_x \clvf (\state) \cdot g_d(\state) \dstb \leq -\gamma V^{\infty}_{\gamma} (\state).
\end{align*} 

\begin{theorem} \label{Thrm: CLVF-QP}
\textbf{(Feasibility Guaranteed R-CLVF-QP)}
{Given $\clvf$ and a reference control $u_r$, the optimal controller can be synthesized by the following CLVF-QP with guaranteed feasibility $\forall x \in \mathcal D_\gamma$ and $0 < \bar \gamma \leq \gamma$.} 
    \begin{align*} 
   &k(x) = u^* = \arg \min _{u\in \mathcal U} \quad (u-u_{r})^T(u-u_{r}), \\ 
    \text{subject to} \hspace{1em}
         &D_x \clvf(\state) \cdot f_x(\state  ) + D_x \clvf (x)\cdot g_u(\state  ) \ctrl \\
         & \hspace{2em} +  \max _{\dstb \in \dset} D_x \clvf (x)\cdot g_d(\state  ) \dstb \leq -\bar \gamma \clvf (\state).
    \end{align*} 
\end{theorem}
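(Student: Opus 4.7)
The plan is to reduce feasibility of the R-CLVF-QP to Proposition~\ref{prop:Vdot<=-gammaV}, and then appeal to standard convex-programming facts for the existence of a minimizer. Since the system is control and disturbance affine, the Hamiltonian decomposes as
\begin{align*}
\dot V_\gamma^\infty &= D_x V_\gamma^\infty\!\cdot g(\state) + D_x V_\gamma^\infty\!\cdot h_u(\state)\ctrl\\
&\quad + D_x V_\gamma^\infty\!\cdot h_d(\state)\dstb,
\end{align*}
which is affine in $\ctrl$ for every fixed $\dstb$. Taking $\max_{\dstb\in\dset}$ over the disturbance preserves concavity/affinity in $\ctrl$ and yields a single scalar affine-in-$\ctrl$ constraint. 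The QP constraint is therefore a closed half-space (intersected with the convex compact control set $\cset$), so the feasible set is convex and compact.

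The feasibility itself is the key step, and it is exactly the content of Proposition~\ref{prop:Vdot<=-gammaV}: for every $\state\in\dom$ and every worst-case disturbance realization, there exists at least one $\ctrl\in\cset$ such that $\max_{\dstb}\min_{\ctrl}\dot V_\gamma^\infty\le -\gamma V_\gamma^\infty(\state)$. Substituting the control-affine form, any such minimizing $\ctrl$ satisfies the linear inequality that appears as the QP constraint; hence the feasible set is non-empty for every $\state\in\dom$. With a non-empty convex compact feasible set and the strictly convex quadratic objective $(u-u_r)^T(u-u_r)$, a unique minimizer exists by the standard Weierstrass argument, proving the theorem.

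The main obstacle is that $\clvf$ is only locally Lipschitz and not everywhere differentiable, so the symbol $D_x\clvf(\state)$ in the QP constraint is not classically defined throughout $\dom$. I would address this exactly as Proposition~\ref{prop:Vdot<=-gammaV} does: at a non-differentiable point, replace $D_x\clvf(\state)$ by any element $p$ of $D^+\clvf(\state)\cup D^-\clvf(\state)$; Proposition~\ref{prop:Vdot<=-gammaV} then guarantees that for this choice of $p$ the linear inequality $p\cdot g(\state)+p\cdot h_u(\state)\ctrl+\max_{\dstb}p\cdot h_d(\state)\dstb\le -\gamma\clvf(\state)$ still admits a feasible $\ctrl\in\cset$. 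Hence feasibility of the QP is preserved pointwise on all of $\dom$, and the proof is complete once the convex-programming argument above is applied to this pointwise constraint.
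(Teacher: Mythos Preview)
Your proposal is correct and follows exactly the paper's approach: the paper's proof is a single sentence, ``This is a direct result of Proposition~\ref{prop:Vdot<=-gammaV},'' and your argument is precisely that reduction. The additional material you supply---the convex-programming existence argument via Weierstrass and the explicit handling of non-differentiable points through super/sub-differentials---elaborates details the paper leaves implicit, but the core idea is identical.
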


\begin{proof}
This is a direct result of Proposition \ref{prop:Vdot<=-gammaV}.
\end{proof}

Note that the QP controller is only point-wise optimal, with respect to ``staying close to the reference controller.'' It is not optimal with respect to the value function. Further, since the R-CLVF is only Lipschitz continuous, its gradient may not be continuous; hence, the QP solution $u = k(x)$ is also not continuous. This may cause the solution of the closed-loop system to lose its uniqueness guarantee~\cite{4518905}. However, such a problem can be solved by considering the sample-and-hold solution as introduced in~\cite{clarke1997asymptotic}. The sample-and-hold solution can be viewed as treating the input of the feedback law as a piecewise-continuous (in $\tvar$) input signal, and therefore, the existence and uniqueness can be guaranteed. Further, this type of solution matches the numerical implementation, {because we usually do not have an analytic feedback controller}. 

{Note that the R-CLVF is computed for a given rate parameter $\gamma$, whereas in the R-CLVF-QP constraint we enforce a (possibly smaller) rate $\bar\gamma$, which relaxes the inequality. With this relaxation, the QP only needs to satisfy a weaker decrease condition, so the optimizer may avoid switching the sign induced by the nonsmooth R-CLVF and might yield a continuous feedback in practice. This makes the QP controller an attractive alternative to the HJ optimal policy, which is typically realized as a bang–bang control.}

In fact, the relation between the stabilizability, the existence of a CLF, and the synthesis of smooth feedback controllers is quite tricky. Even if a continuously differentiable CLF is obtained, we can only guarantee to synthesize a continuous feedback controller, and the resulting closed-loop system will still face the problem of non-existence and non-uniqueness of its solution in the classic sense. Differential inclusion is another popular approach that is used to solve this issue~\cite{clf_zubov}.

    



\section{NUMERICAL BENEFITS}
\label{sec: speedup}

In the numerical computation of the R-CLVF, equation \eqref{eqn:CLVF_DPP} is solved on a discrete grid, until some convergence threshold is met, which leads to the well-known ``curse of dimensionality.'' In this section, we provide two main methods to overcome this issue: the warmstarting technique and the system decomposition technique. Necessary proofs are provided, and the effectiveness is validated with a 10D example in the numerical example.

\subsection{R-CLVF with Warmstarting}
In the previous work, we introduced a two-step process: first, the SRCIS is computed, then the R-CLVF is computed. This process requires solving the TV-R-CLVF-VI two times, each with different initializations. In this subsection, we show that the converged value function for the first step can be used to warmstart the second step computation. 

Denote the TV-R-CLVF with initial value $k(\state)$ as $\bar {V}_\gamma (\state,\tinit)$, and the infinite time value function as $\bar {V}^\infty_\gamma (\state)$, with the corresponding domain $\bar {\mathcal D}_\gamma$. We only change the initial value, and still have the same loss function $\loss(\state)$ for $\bar {V}_\gamma (\state,\tinit)$ and $\tclvf(\state,\tinit)$.

\begin{theorem}\label{thrm:R-CLVF_warmstarting_inexact}
    For all initialization $\bar {V}_\gamma(\state,\thor) = k(x)$, we have $\bar {V}_\gamma(\state,\tinit) \geq  {V_\gamma}(\state,\tinit) $ holds $\forall \state$, $\forall \tinit < \thor$.
    
    \begin{proof}
We show this results for three cases: $k(\state) =\loss(\state)$, $k(\state) > \loss(\state)$, and $k(\state) < \loss(\state)$.

(1) Assume $k(\state) = \loss(\state)$. Then $\bar {V}_\gamma(\state,\tinit) \geq  {V_\gamma}(\state,\tinit)$  holds $\forall \state$ and $\tinit$.
        
(2) Assume $k(\state) > \loss(\state)$. From \eqref{eqn:TV-R-CLVF_DPP}, $\forall \tinit < \tinit+\delta = \thor$: 
        \begin{align*}
            \bar V_\gamma(\state,\tinit) 
            =& \sup _{\dmap \in \Dmap} \inf _{\ctrl \in \cfset} \max \biggl\{e^{ \gamma \delta} \bar {V}_\gamma(z,\thor), 
            \max _{\tvar \in [\tinit, \thor]} e^{\gamma (\tvar-\tinit)} \loss(\xi(s)) \biggl\} \\
            =& \sup _{\dmap \in \Dmap} \inf _{\ctrl \in \cfset} \max \biggl\{e^{\gamma \delta} k(\xi(0)), 
             \max _{\tvar \in [\tinit, \thor]} e^{\gamma (\tvar-\tinit)} \loss(\xi(s)) \biggl\} \\
            \geq & \sup _{\dmap \in \Dmap} \inf _{\ctrl \in \cfset} \max \biggl\{e^{\gamma \delta} \loss(\xi(0)), 
            \max _{\tvar \in [\tinit,\thor]} e^{\gamma (\tvar-\tinit)} \loss(\xi(s)) \biggl\} \\
            = & V_\gamma(x,t)
        \end{align*}

(3) Assume $k(\state) < \loss(\state)$. Then, at time $t = \thor$, we have $\bar {V}_\gamma(\state,\thor) < V_\gamma(\state,\thor)$. Consider an infinitesimal time step $\delta t<0$ and $\delta t \rightarrow \thor^-$, using \eqref{eqn:CLVF_DPP}, we have: 
         \begin{align*}
            \bar V_\gamma(\state,\thor^-) 
            =&  \sup _{\dmap \in \Dmap} \inf _{\ctrl \in \cfset} \max \biggl\{e^{\gamma 0^-} k(\xi(\thor^-)), \\
            & \hspace{5em}
            \max _{s\in[\thor^-, \thor]} e^{\gamma (s-\thor^-)} \loss(\xi(s) \biggl\} \\
            =&  \max \biggl\{e^{\gamma t_1} k(\xi(\thor)), 
             e^{-\gamma \thor^-} \loss(\xi(\thor^-) \biggl\} \\
            =& e^{-\gamma \thor^-} \loss(\xi(\thor^-)\\
            \geq & \loss(\xi(\thor^-) = V_\gamma(x,\thor^-),
        \end{align*}
        in other words, after one infinitesimal small step, we get $\bar V_\gamma(\state,\tinit ^-) >  V_\gamma((\state,\tinit ^-)$. Now, replace $k(\state) = \bar V_\gamma(\state,\tinit ^-) $, we return to the second case, and the remaining proof follows.

    \end{proof}
\end{theorem}

Theorem~\ref{thrm:R-CLVF_warmstarting_inexact} shows that no matter what the initial condition is, the value function propagated with this initial condition is always an over-approximation of the TV-R-CLVF. This also holds for the R-CLVF.

\begin{proposition} \label{prop: Vk>V}
    If $\bar {V}_\gamma ^\infty(\state)$ exists on $\bar {\mathcal D_\gamma }$, then $\bar {V}_\gamma ^\infty(\state) \geq  \clvf(\state)$ and $\bar {\mathcal D}_\gamma  \subseteq  \mathcal D_\gamma $.
    
    \begin{proof}
        The first part is a direct result from Theorem \ref{thrm:R-CLVF_warmstarting_inexact}. The second part can be proved by contradiction. Assume $\state\ \in \bar {\mathcal D}_\gamma $ but $ \state \notin  {\mathcal D_\gamma }$. This means $\bar {V}_\gamma ^\infty(\state)$ is finite, but $ {V_\gamma ^\infty}(\state)$ is infinite, which contradicts the first part of this proposition. 
    \end{proof}
\end{proposition}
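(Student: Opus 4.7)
The plan is to derive both claims from the warmstart inequality $\bar V_\gamma(x,t) \geq V_\gamma(x,t)$ already established in Theorem~\ref{thrm:R-CLVF_warmstarting_inexact} by passing to the limit $t \to -\infty$. Since the hypothesis is about existence of $\bar V^\infty_\gamma$, I would actually prove the inclusion $\bar{\mathcal D}_\gamma \subseteq \mathcal{D}_\gamma$ first, so that the comparison $\bar V^\infty_\gamma(x) \geq \clvf(x)$ on the right-hand set is well-defined.

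For the inclusion, I would use a direct monotonicity argument rather than contradiction. Fix any $x \in \bar{\mathcal D}_\gamma$, so that $\bar V^\infty_\gamma(x)$ is finite. The TV-R-CLVF $V_\gamma(x,t)$ is monotone non-decreasing as $t \to -\infty$, since the inner supremum in the definition \eqref{eqn:finite_time_CLVF} is taken over the interval $[t,0]$ and a longer horizon cannot decrease the maximum (this is the same monotonicity used implicitly in the definition of $\clvf$). Combining monotonicity with Theorem~\ref{thrm:R-CLVF_warmstarting_inexact} gives $V_\gamma(x,t) \leq \bar V_\gamma(x,t) \leq \bar V^\infty_\gamma(x) < \infty$ for every $t<0$, so $\{V_\gamma(x,t)\}$ is a monotone bounded net and its limit exists by the monotone convergence principle. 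Hence $x \in \mathcal{D}_\gamma$.

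With the inclusion in hand, the inequality $\bar V^\infty_\gamma(x) \geq \clvf(x)$ is immediate: for every $t < 0$ Theorem~\ref{thrm:R-CLVF_warmstarting_inexact} gives $\bar V_\gamma(x,t) \geq V_\gamma(x,t)$, and since both limits now exist, passing $t \to -\infty$ preserves the inequality. As an alternative, one could go through contradiction: if $x \in \bar{\mathcal{D}}_\gamma$ but $x \notin \mathcal{D}_\gamma$, then $V_\gamma(x,t) \nearrow +\infty$, so the warmstart inequality forces $\bar V^\infty_\gamma(x) = +\infty$, contradicting $x \in \bar{\mathcal D}_\gamma$. I expect the only mildly technical step to be justifying the monotonicity of $V_\gamma(x,t)$ in $t$; once that is stated, the rest is a routine limit argument and no new machinery beyond the preceding theorem is required.
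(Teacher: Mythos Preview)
Your proposal is correct and follows essentially the same route as the paper: both derive everything from the pointwise inequality $\bar V_\gamma(x,t)\ge V_\gamma(x,t)$ of Theorem~\ref{thrm:R-CLVF_warmstarting_inexact} and pass to the limit, with the inclusion handled by the observation that a finite $\bar V_\gamma^\infty(x)$ forces $V_\gamma^\infty(x)$ to be finite. The only difference is cosmetic: the paper states the inequality first and then derives the inclusion by contradiction, whereas you reverse the order and give a direct monotone-bounded argument for the inclusion (and mention the contradiction as an alternative); your ordering is arguably cleaner since it makes $\clvf(x)$ well-defined before you compare to it.
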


{The above results concern inexact warm-starting, which cannot be used directly in most cases, since our goal is to recover the exact R-CLVF. Nevertheless, these results are essential for establishing the exact warm-starting presented below. In particular, we show that with appropriate initializations, the warm-started procedure can recover the exact R-CLVF.
}
\begin{theorem}\label{thrm:R-CLVF_warmstarting_exact}
    For initialization $\bar {V}_\gamma (\state,\thor) = k(\state) \leq \clvf(\state)$, we have $ \bar {V} _\gamma ^\infty(\state) =  \clvf(\state)$.
    
\begin{proof}
        Denote $\tilde k(\state) =\clvf(x)$, and the value function initialized with $\tilde k(\state)$ as $\tilde {V}_\gamma (\state,\tinit)$. we have $\forall \state, \tinit < \tinit+\delta = 0$: 
        \begin{align*}
            \tilde V_\gamma(\state,\tinit) 
            =& \sup _{\dmap \in \Dmap} \inf _{\ctrl \in \cfset} \max \biggl\{e^{-\gamma \tinit} \tilde {V}_\gamma(z,\thor), 
            \max _{\tvar\in[\tinit, \thor]} e^{\gamma (\tvar - \tinit)} \loss(\xi(\tvar)) \biggl\} \\
            =&  \sup _{\dmap \in \Dmap} \inf _{\ctrl \in \cfset} \max \biggl\{e^{-\gamma \tinit} \tilde k(\xi(\thor)), 
           \max _{\tvar\in[\tinit, \thor]} e^{\gamma (\tvar - \tinit)} \loss(\xi(\tvar)) \biggl\} \\
            \geq & \sup _{\dmap \in \Dmap} \inf _{\ctrl \in \cfset} \max \biggl\{e^{-\gamma \tinit} k(\xi(\thor)), 
           \max _{\tvar\in[\tinit, \thor]} e^{\gamma (\tvar - \tinit)} \loss(\xi(\tvar)) \biggl\} \\
            = & \bar {V}_\gamma(\state,\tinit).
        \end{align*}
        Note that $\clvf(\state)$ is the already the converged value function, we have $\clvf(\state)  = \tilde {V}_\gamma ^\infty (\state, \tinit) \geq  \bar {V}_\gamma(\state,\tinit)$. 
        
        Similar to Propsition \ref{prop: Vk>V}, If $ \clvf(\state)$ exists on $ \dom$, then $\bar {V}_\gamma^\infty(x) \leq  {V_\gamma^\infty}(x)$, and $\dom \subseteq \bar {\mathcal D}_\gamma$. Combined, we get $\dom = \bar {\mathcal D}_\gamma$, and $\forall \state \in \dom$, $\bar {V}_\gamma^\infty(\state) =\clvf(\state)$.
       
    \end{proof}
\end{theorem}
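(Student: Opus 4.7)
The plan is to establish $\bar V_\gamma^\infty(\state) = \clvf(\state)$ by a sandwich argument, combining the already-available inequality $\bar V_\gamma^\infty \geq \clvf$ from Proposition \ref{prop: Vk>V} with a new inequality in the opposite direction that exploits the hypothesis $k(\state) \leq \clvf(\state)$.

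For the lower bound $\bar V_\gamma^\infty(\state) \geq \clvf(\state)$ and the containment $\bar {\mathcal D}_\gamma \subseteq \dom$, I would simply invoke Proposition \ref{prop: Vk>V}, which is a direct consequence of Theorem \ref{thrm:R-CLVF_warmstarting_inexact} and holds regardless of how $k(\state)$ compares to $\ell(\state)$ or $\clvf(\state)$. Passing to the limit $\tinit \to -\infty$ in the finite-horizon inequality $\bar V_\gamma(\state,\tinit) \geq V_\gamma(\state,\tinit)$ gives the desired infinite-horizon statement.

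For the upper bound, I would introduce the auxiliary warmstart with terminal condition $\tilde k(\state) := \clvf(\state)$ and let $\tilde V_\gamma(\state,\tinit)$ denote the resulting time-varying function. By the CLVF dynamic programming principle (Theorem \ref{thm:CLVF_dynamic_programming_principle}), $\clvf$ is a fixed point of the backwards DPP recursion, so $\tilde V_\gamma(\state,\tinit) = \clvf(\state)$ for every $\tinit \leq \thor$. The DPP update \eqref{eqn:CLVF_DPP} is monotone in its terminal condition, because the outer $\max_{\dmap}\min_{\csig}\max_s$ and the running-max over $\ell$ all preserve the pointwise ordering of the terminal penalty $e^{-\gamma \tinit}k(\xi(\thor))$. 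Since $k(\state) \leq \tilde k(\state) = \clvf(\state)$ by hypothesis, this monotonicity propagates through the recursion and yields $\bar V_\gamma(\state,\tinit) \leq \tilde V_\gamma(\state,\tinit) = \clvf(\state)$ for every $\tinit \leq \thor$. Taking $\tinit \to -\infty$ gives $\bar V_\gamma^\infty(\state) \leq \clvf(\state)$, and finiteness of this limit on $\dom$ gives the reverse containment $\dom \subseteq \bar{\mathcal D}_\gamma$.

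Combining the two inequalities yields $\bar V_\gamma^\infty = \clvf$ on the common domain $\dom = \bar{\mathcal D}_\gamma$. The only subtle step is the monotonicity of the minimax DPP operator in its terminal condition; while intuitive, it must be argued without selecting $k$-dependent optimal strategies, since the optimizers for $k$ and $\tilde k$ may differ. The cleanest route is to fix an arbitrary $\dmap \in \Dmap$ and $\csig \in \cfset$, note that the inner expression in \eqref{eqn:CLVF_DPP} is monotone in the terminal term, and then take the same $\max_{\dmap}\min_{\csig}$ on both sides; this preserves the inequality without requiring the optimizers to coincide. Once that monotonicity is recorded, the sandwich and the domain equality are immediate.
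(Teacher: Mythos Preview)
Your proposal is correct and follows essentially the same route as the paper: introduce the auxiliary warmstart $\tilde k = \clvf$, use the DPP fixed-point property of $\clvf$ together with monotonicity of the DPP operator in its terminal datum to get $\bar V_\gamma(\state,\tinit)\le\clvf(\state)$, and combine with Proposition~\ref{prop: Vk>V} for the reverse inequality and the domain equality. Your explicit remark that monotonicity must be argued for arbitrary fixed $\dmap,\csig$ (rather than via possibly different optimizers) is a nice point of care that the paper leaves implicit.
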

\begin{algorithm}[t]
\caption{Obtaining the R-CLVF with warmstarting}
\label{Algorithm:CLVF}
\begin{algorithmic}[1]
 \Require: $\dyn(\state,\ctrl,\dstb)$, $\cset$, $\dset$, $\gamma >0$, convergence threshold $\Delta$, $\ell(x)$,  $\delta t$. \\
\textbf{Output}: $\clvf(\state)$, $\mrcis$\\
\textbf{Initialization:} 
 \State $ V(x,t_0) \gets \ell(x)$\\
 \textbf{Find $\mrcis$}
 \State $ V^\infty (x) \gets$ update\_value($\dyn$, $\cset$, $\dset$,  $\Delta$, $\delta t$, $V(x,0)$, $\ell(x)$)
 \State $\minval \gets \min_{\state} V^\infty(\state)$, $\mrcis \gets \{ V^\infty(x) = \minval\}$\\
 \textbf{Find R-CLVF}
 \State $\ell(x) \gets \ell(x) - \minval$, $V(x,t_0) \gets V^\infty(\state)  - \minval $
 \State $ \clvf (x) \gets$ update\_value($\dyn$, $\cset$, $\dset$,  $\Delta$, $\delta t$, $V(x,0)$, $\ell(x)$) \\
 \textbf{update\_value($\dyn$, $\cset$, $\dset$,  $\Delta$, $\delta t$, $V(x,0)$, $\ell(x)$)}
 \State $t\gets 0$
 \While {$dV \geq \Delta$}
 \State $V(x,t+\delta t) \gets V(x,t)$
 \State update $V(x,t+\delta t)$ using equations~\eqref{eqn:TV-R-CLVF_DPP}~\eqref{eqn:TV-R-CLVF-VI}
 \State $dV = \min _{\state} (V(x,t+\delta t) - V(x,t) )$
 \State $t\gets t+\delta t$
 \EndWhile

\end{algorithmic}
\end{algorithm}

Using Theorem \ref{thrm:R-CLVF_warmstarting_exact}, we provide an enhanced version of the original algorithm for computing the R-CLVF, shown in Alg.~\ref{Algorithm:CLVF}. The main difference is that after finding the SRCIS and the corresponding value function $V^\infty (\state)$ (line 5), the next step computation (line 9) is initialized with $V^\infty (\state ) - \minval$, instead of $\ell (\state)$. The exact warmstarting is guaranteed, because we can always guarantee $ V^\infty (\state) - \minval \leq \clvf(\state)$. From the numerical examples, Alg.~\ref{Algorithm:CLVF} accelerates the computation from 5\% to 90\%.


\subsection{R-CLVF with Decomposition}

{We first introduce the self-contained subsystems decomposition. }

\begin{definition}
\label{def: SCSD}
(Self-contained subsystem decomposition) (SCSD) Given system \eqref{eqn:dynamic_system} and assume there exists state partitions $\subs_1 = (\state_1,\state_c) \in \tsset_1$, $\subs_2 = (\state_2,\state_c) \in \tsset_2$, where $\state_1\in\mathbb{R}^{n_1}$, $\state_2\in\mathbb{R}^{n_2}$, $\state_c\in\mathbb{R}^{n_c}$, $n_1,n_2>0$, $n_c\ge 0$, $n_1+n_2+n_c=n$. Assume also the control and disturbance inputs can be partitioned similarly with $v_1 = (u_1,u_c) \in \mathcal V_1$, $v_2 = (u_2,u_c) \in \mathcal V_2$, where $u_1 \in \mathbb R^{m_1}$, $u_2 \in \mathbb R^{m_2}$, $u_c \in \mathbb R^{m_c}$ and $m_1+m_2+m_c = m$. $p_1 = (d_1,d_c) \in \mathcal P_1$, $p_2 = (d_2,d_c) \in \mathcal P_2$, where $d_1 \in \mathbb R^{p_1}$, $d_2 \in \mathbb R^{p_2}$, $d_c \in \mathbb R^{p_c}$ and $p_1+p_2+p_c = p$. Given the system \eqref{eqn:dynamic_system}, the two subsystems of it  are 
\begin{align*} 
    \dot z_1 = f_1(z_1,v_1,p_1), \quad
    \dot z_2 = f_2(z_2,v_2,p_2).
\end{align*}
Here, $x_c$, $u_c$, $d_c$ are called the shared state, control, and disturbances respectively.
\end{definition}



\begin{theorem}\label{thrm:R-CLVF_decomposition}
    Assume the system can be decomposed into several self-contained subsystems, and there are no shared control and states between each subsystem. Denote the corresponding R-CLVFs for the subsystems as $V_{\gamma,i}^\infty (z_i)$ with domain $\mathcal D_{\gamma_i,z_i}$ and loss $\loss_i$, and define 
    \begin{align} \label{eqn: reconstructCLF}
        W_\gamma^\infty(\state) = \max_{i} V_{\gamma,i}^\infty (z_i).
    \end{align}
    Then, $\loss(\state) = \max_{i} \loss(\subs_i)$ implies $W_\gamma^\infty(\state)$ is the R-CLVF of system~\eqref{eqn:dynamic_system}.
    
    \begin{proof}
        {The proof can be obtained analogously following Lemma 1 of~\cite{gong2024synthesizing}, and we provide a sketch here. We first prove that $\loss(\state) = \max_{i} \loss(\subs_i)$ implies 
        \begin{align*}
             W_\gamma(\state,\tinit) = \max_{i} V_{\gamma,i} (z_i,\tinit)
        \end{align*}
        by contradiction. More specifically, we assume $W_\gamma < V_\gamma$, and show that the optimal controls of $V_{\gamma,i}$ can be used to construct a control for the original system. We then use the definition of $W_\gamma$ to show that the reconstructed control results a value lower than the optimal control for $V_\gamma$, which is a contradiction. The opposite direction can be proved with same procedure. }
    \end{proof}
\end{theorem}

\section{NUMERICAL EXAMPLES}
In this section, we provide three examples to showcase the main benefits of using R-CLVF: 1) it handles general nonlinear dynamics with bounded control and disturbance, 2) it finds and stabilizes the system to its SRCIS (based on different norms chosen) given a user-specified exponential rate $\gamma$, 3) with warmstarting and decomposition, the computational cost is decreased significantly. All examples are solved using MATLAB and toolboxes \cite{mitchell2007toolbox,chenoptimal}. All trajectories are generated with QP controller~\eqref{Thrm: CLVF-QP} with reference control $u_r = 0$.

{Since the numerical value function is obtained on a discrete grid, we use interpolation methods to obtain the value for the states that are not grid points. The R-CLVF-VI may not hold for those states due to the interpolation, and the R-CLVF-QP may lose the feasibility guarantee. To solve this problem, before solving the R-CLVF-QP, we check its feasibility and find the minimal slack variable to make it feasible by solving another optimization problem.}

\label{sec: examples}
\subsection{2D System Revisit}
Consider again the system~\eqref{eqn:EX_IN2D}, and specify $ \hjloss(\state) = ||x||_\infty$. We compute the R-CLVF with $\gamma_1 = 0.1$, $\gamma_2 = 0.3$. The results are shown in Fig.~\ref{fig:IN2D_traj}. It should be noted that for this system, the SRCIS for $\gamma = 0.1$ and $\gamma = 0.2$ are both $\mrcis = \{|x| \leq 0.5, |y| \leq 0.5\}$, and ROES $\roes = \{|x| > 0.5, |y| < 1\} \setminus \mrcis$. 

\begin{figure}[t]
\centering
\includegraphics[width=\columnwidth]{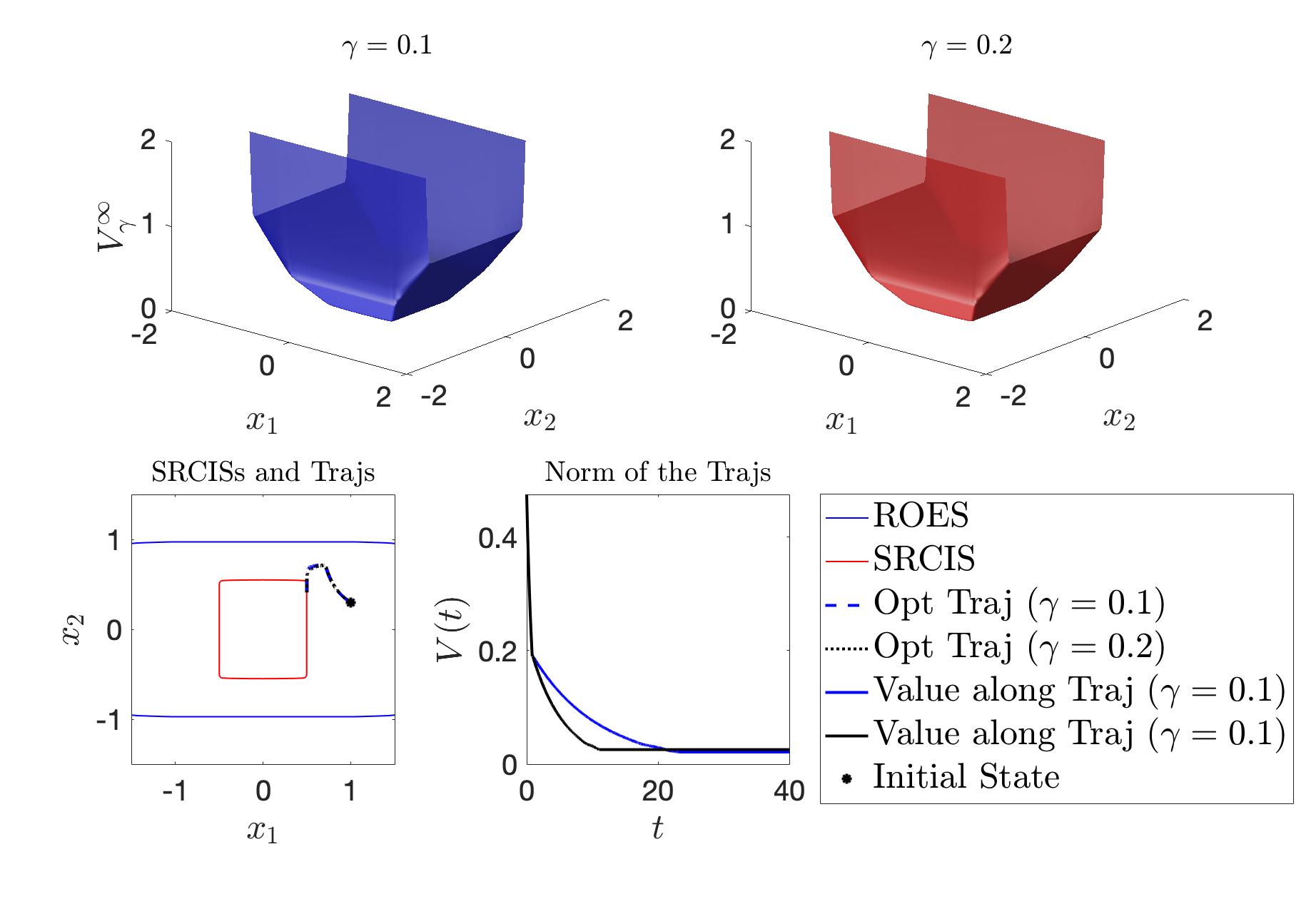}
    \caption{Top: R-CLVF with $\gamma = 0.1$ (left) and $\gamma = 0.2$ (right). Bottom left: ROES, SRCIS, and the two optimal trajectories using R-CLVF-QP controller. The ROES and SRCIS for different $\gamma$ are all the same, while the optimal trajectories are different. To see this, first consider a point on the boundary of ROES, $[0.1,1]$, $d$ will make $x$ increase 0.1 to 1, while $u$ cannot decrease $y$. Since the distance is measured by $||x||_\infty$, we have $\loss (\opttraj) = 1$, $\forall t < 0$. Using equation~\eqref{eqn:finite_time_CLVF} and~\eqref{eqn:infinite_time_CLVF}, the value will be infinite. However, for any $|y|<1$, the control can decrease $y$ to $0$, and for all $x$, it either goes to 0.5 or -0.5. Note both happen in a finite time horizon. Therefore, using equation~\eqref{eqn:finite_time_CLVF} and~\eqref{eqn:infinite_time_CLVF}, the value will be finite for all $\gamma \geq 0$. Bottom mid: value decay along the two optimal trajectories. All controllers were generated using R-CLVF-QP. With a 151-by-151 grid, the computation time for $\gamma = 0.1$ is 215.6s with warmstarting, and 289.7s w/o warmstarting, and 211.5s with warmstarting, and 258.4s w/o warmstarting for $\gamma=0.2$.
    }
    \label{fig:IN2D_traj}
\end{figure}

\subsection{3D Dubins Car}
Consider the 3D Dubins car example:
\begin{align*}
    \dot x = v \cos (\theta) + d_x, \hspace{1em} \dot y = v \sin(\theta) +d_y, \hspace{1em} \dot \theta = u,
\end{align*}
where $v = 1$ and $u \in [-\pi/2 , \pi/2] $ is the control and $d_x,d_y \in [-0.1,0.1]$ is the disturbance. This system has no equilibrium point. The SRCISs with different $ \hjloss(\state)$ are shown in Fig.~\ref{fig:Dubins}, and the trajectory converges to the SRCIS exponentially.

\begin{figure}[t]
\centering
\includegraphics[width=\columnwidth]{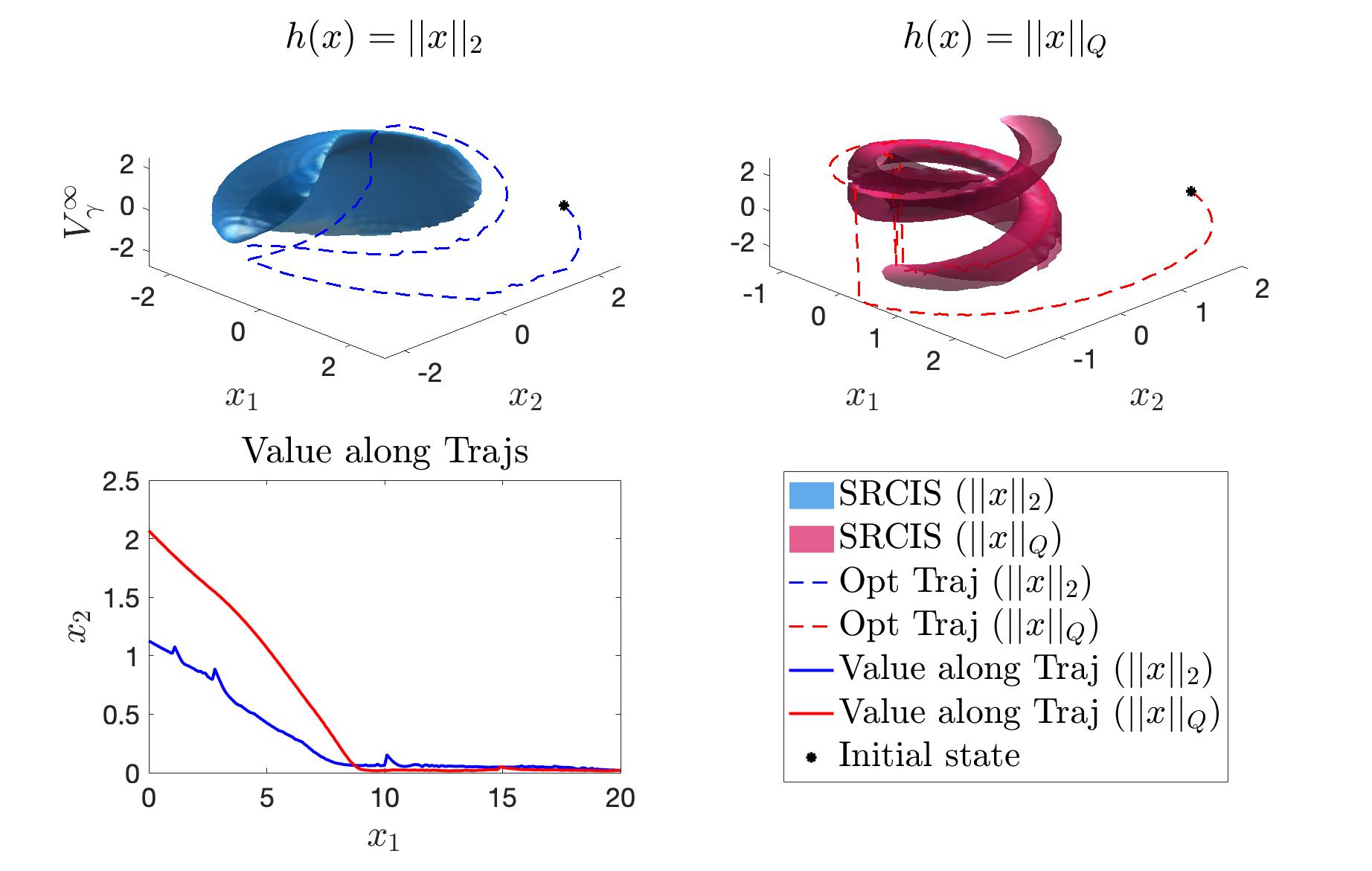}
    \caption{Different SRCISs with different $ \hjloss (\state)$. Top left: SRCIS and optimal trajectory with $\hjloss  (\state) = ||\state||_2$. Top right: SRCIS and optimal trajectory with $\hjloss (\state) = ||\state||_Q$, where $Q = diag[1,1,0]$. Bottom left: the value along the optimal trajectories. All controllers were generated using R-CLVF-QP. With a 51-by-51-by-53 grid, the computation time for $ \hjloss(\state) = ||\state||_2$ is 264s with warmstarting, and 386.6s w/o warmstarting, and 143.4s with warmstarting, and 207.7s w/o warmstarting for $ \hjloss(\state) = ||\state||_Q$.
    }
    \label{fig:Dubins}
\end{figure}

\begin{figure}[t]
\centering
\includegraphics[width=\columnwidth]{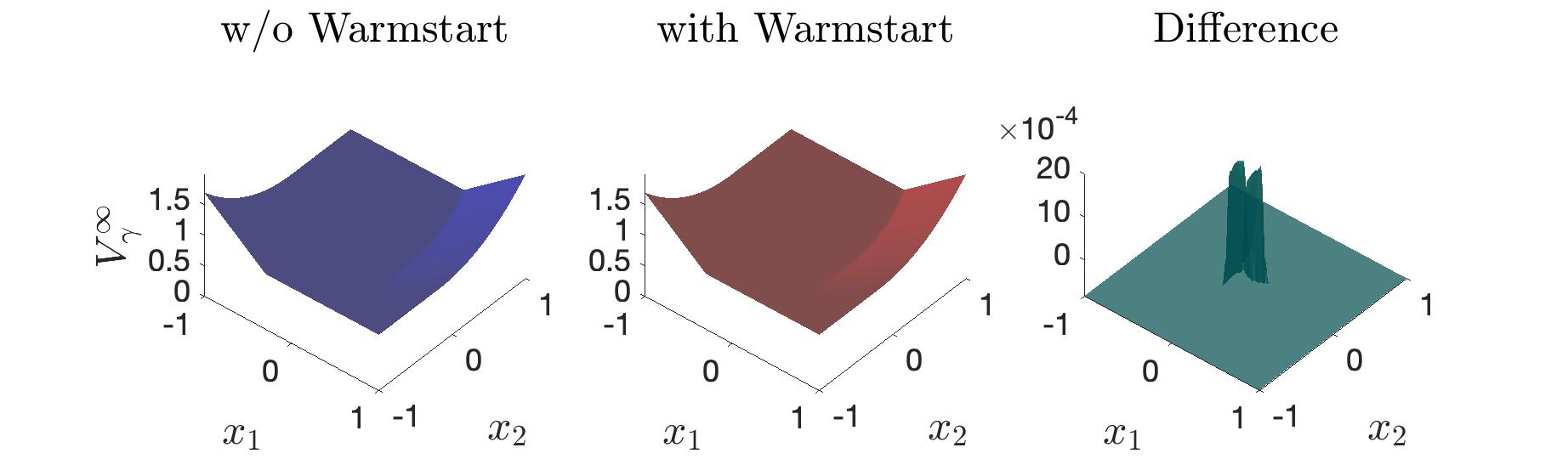}
    \caption{Comparison of R-CLVF with and without warmstarting for the Z-subsystem. The difference is almost negligible.
    }
    \label{fig:Zdim}
\end{figure}

\subsection{10D Quadrotor}
Consider the 10D quadrotor system: 
\begin{align}
    &\dot{x} = v_x + d_x, \hspace{2mm} \dot{v_x} =  g\tan{\theta_x}, \hspace{2mm} \dot{\theta_x} = -d_1\theta_x + \omega_x, \notag \\ 
    &\dot{\omega_x} =  -d_0\theta_x + n_0u_x, \hspace{2mm} \dot{y} =  v_y + d_y, \hspace{2mm}  \dot{v_y} = g\tan{\theta_y}, \notag \\
    & \dot{\theta_y} = -d_1\theta_y + \omega_y, \hspace{2mm} \dot{\omega_y} = -d_0\theta_y + n_0u_y, \notag \\
    \
    & \dot{z} = v_z + d_z, \hspace{2mm} \dot{v_z} = u_z, \label{eq: 10D_Quad}
  \end{align}
where $(x,y,z)$ denote the position, $(v_x, v_y, v_z)$ denote the velocity, $(\theta_x, \theta_y)$ denote the pitch and roll, $(\omega_x, \omega_y)$ denote the pitch and roll rates, and $(u_x, u_y, u_z)$ are the controls. The parameters are set to be $d_0 = 10, d_1 = 8, n_0 = 10, k_T = 0.91, g = 9.81$, $|u_x|, |u_y| \leq \pi/9$, $u_z\in [-1, 1]$, $|d_x|, |d_y|, |d_z| \leq 0.1$.

This 10D system can be decomposed into three subsystems: X-sys with states $[x,v_x,\theta_x,\omega_x]$,  Y-sys with states $[y,v_y,\theta_y,\omega_y]$, and Z-sys with states $[z,v_z]$. It can be verified that all three subsystems have an equilibrium point at the origin. Further, there's no shared control, disturbance, or state among subsystems. We use $ \hjloss(\state) = ||\state||_\infty$, which satisfies the condition $\loss(\state) = \max_i \loss_i(z_i)$. The R-CLVF is reconstructed using equation~\eqref{eqn: reconstructCLF}. 

A comparison of the R-CLVF for the Z-sys with and without warmstarting is shown in Fig.~\ref{fig:Zdim}, showing that the warmstarting provides the exact result. The trajectory is shown in Fig.~\ref{fig:10D}.

\begin{figure}[t]
\centering
\includegraphics[width=\columnwidth]{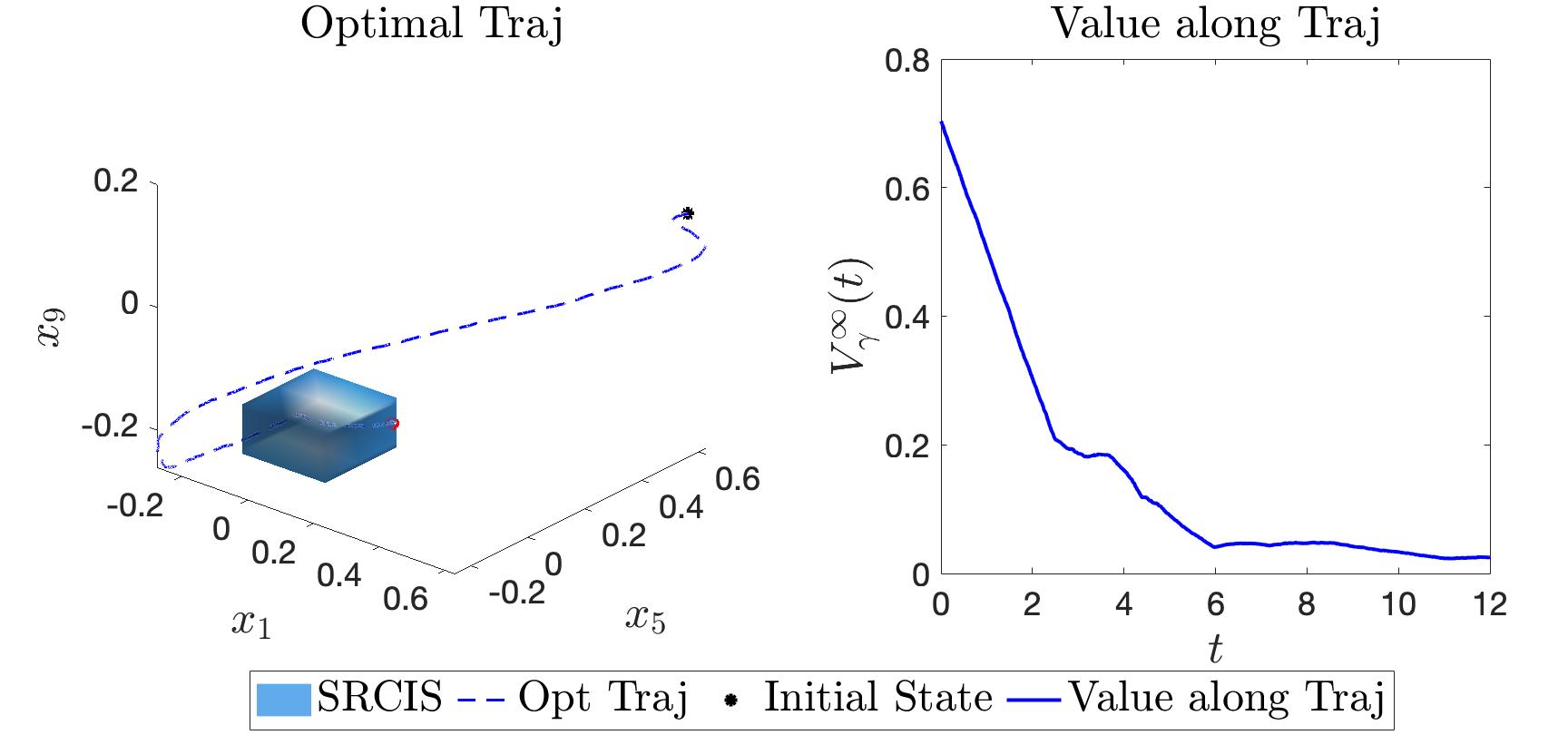}
    \caption{Left: SRCIS of the reconstructed R-CLVF and the optimal trajectory. For Z-sys, with 101 grids for each state, time step = 0.1, convergence threshold = 0.0015, the computation time is 36.59s with warmstarting and 42.72s w/o warmstarting. For X(Y)-sys, with 17 grids for each state, time step = 0.1, convergence threshold = 0.02, the computation time is 828.27s with warmstarting and 887.79 w/o warmstarting. Right: the decay of the value along the optimal trajectory. 
    }
    \label{fig:10D}
\end{figure}




\section{Conclusions}

In this paper, we extended our preliminary work on constructing CLVFs using HJ reachability analysis to the system with bounded control and disturbances and to stabilize to a random point of interest. We provided more detailed discussions on several important Lemmas and Theorems. Additionally, warmstarting and decomposition methods are proposed to overcome the ``curse of dimensionality,'' and the effectiveness of both techniques is validated with numerical examples. 
Future directions include finding conditions under which the self-contained subsystem decomposition method provides the R-CLVF, and incorporating learning-based methods to tune the exponential rate $\gamma$ for online execution in robotics applications. 


    
\begin{appendix}
\section{Appendix}

\subsection{Proof of Proposition~\ref{prop: TV-R-CLVF_lipschitz}} \label{App: proof_prop1}
{Given any open set $\mathcal C$ and arbitrary control and disturbance signals. Since the solution of~\eqref{eqn:dynamic_system} is guaranteed to exist given assumptions~\ref{assumption 1} and~\ref{assumption 2}, 
the cost function 
\begin{align}\label{eqn:proof_TV_CLVF_lip1}
    J_\gamma(\tinit,\state,\csig,\dmap) & = \max_{\tvar \in [\tinit, \thor]} e^{\gamma (\tvar - \tinit)} \loss (\trajmap), \notag \\
    & \leq  e^{ -\gamma  \tinit } \max_{\tvar \in [\tinit, \thor]} \loss (\trajmap)
\end{align}
is bounded. Since this holds for arbitrary control and disturbance signals, the TV-R-CLVF is also bounded. }

{For the local Lipschitzness in $\state$, given any $x, y \in \mathcal C$, $\csig \in \cfset, \dmap \in \Dmap$, we have: 
\begin{align*}
    & \| J_\gamma (\tinit,x,u(\cdot),\dmap) - J_\gamma (\tinit,y,u(\cdot),\dmap) \|\\
    =&\| \max _{\tvar \in [\tinit, \thor]} e^{\gamma (\tvar-\tinit)} \ell\big(\xi(s;t,x,u(\cdot),\dmap)\big) - \\
    & \hspace{3em} \max _{\tvar \in [\tinit, \thor]} e^{\gamma (\tvar-\tinit)}\ell \big(\xi(s;t,y,u(\cdot),\dmap) \big) \|\\ 
    \leq & \max _{\tvar \in [\tinit, \thor]} \| e^{\gamma (\tvar-\tinit)} \ell\big(\xi(s;t,x,u(\cdot),\dmap)\big)  -\\
    & \hspace{3em} e^{\gamma (\tvar-\tinit)} \ell\big(\xi(s;t,y,u(\cdot),\dmap)\big) \| \\
    \leq & \max _{\tvar \in [\tinit, \thor]}  e^{\gamma (s-t)} \|  \ell\big(\xi(s;t,x,u(\cdot),\dmap)\big)  -\\
    & \hspace{3em} \ell\big(\xi(s;t,y,u(\cdot),\dmap)\big) \| \\
    =& e^{ - \gamma \tinit} \|  \ell\big(\xi(s;t,x,u(\cdot),\dmap)\big)  -\\
    & \hspace{3em} \ell\big(\xi(s;t,y,u(\cdot),\dmap)\big) \|.
\end{align*}
Plug in the definition of $\loss$, we have: 
\begin{align*}
     & \| J_\gamma (\tinit,x,u(\cdot),\dmap) - J_\gamma (\tinit,y,u(\cdot),\dmap) \|\\
    \leq & e^{ - \gamma \tinit} \bigl \| \|  \xi(s;t,x,u(\cdot),\dmap) \|  - \| \xi(s;t,y,u(\cdot),\dmap) \| \bigr \| \\
    \leq & e^{ - \gamma \tinit} \|  \xi(s;t,x,u(\cdot),\dmap) - \xi(s;t,y,u(\cdot),\dmap) \| .
\end{align*}
Since this holds for arbitrary $\csig$ and $\dmap$, we have
\begin{align*}
     & \| \tclvf (x,t) - \tclvf(y,t)\|\\
    \leq & e^{ - \gamma \tinit} \|  \xi(s;t,x,u(\cdot),\dmap) - \xi(s;t,y,u(\cdot),\dmap) \|.
\end{align*}
Because of the continuous dependence on the initial condition, $\forall x,y \in \mathcal C$, there exists a constant $c>0$ such that 
\begin{align*}
     \|\xi(s;t,x,u(\cdot),\dmap) - \xi(s;t,y,u(\cdot),\dmap)\| \leq  c\|x-y\|,
\end{align*}
Combined, we have
\begin{align}\label{eqn:proof_TV_CLVF_lip2}
    \|\tclvf(x,t) -  \tclvf(y,t) \| \leq e^{ - \gamma \tinit}  c\|x-y\| 
\end{align}}



{For the local Lipschitzness in $\tinit$, for any $\tinit < t_1 \leq \tvar \leq \thor$, and any $\dmap, \csig$, we have
\begin{align*}
    J_\gamma(t_1,\state,\csig,\dmap) \geq e^{\gamma (\tvar - t_1)} \loss (\xi(\tvar ; t_1,\state, \csig, \dmap)).
\end{align*}
Combined with~\eqref{eqn:proof_TV_CLVF_lip1}, we have
\begin{align*}
    & J_\gamma(t,\state,\csig,\dmap) - J_\gamma(t_1,\state,\csig,\dmap) \\
    \leq & e^{ -\gamma  \tinit } \max_{\tvar \in [\tinit, \thor]} \loss (\trajmap) - \\
    & \hspace{6em } e^{\gamma (\tvar - t_1)} \loss (\xi(\tvar ; t_1,\state, \csig, \dmap)).
\end{align*}
From~\eqref{eqn:TV-R-CLVF_DPP}, we have
\begin{align*}
    & \tclvf(\state, \tinit) = \sup _{\dmap \in \Dmap_\tinit} \inf _{\csig \in \cfset_\tinit} \max \biggl\{e^{\gamma (t_1-t)} V_\gamma(\xi(t_1),t_1),  \\
    & \hspace{6em }\max_{s\in[t, t_1]} e^{\gamma (s-t)} \loss(\xi(s)) \biggl\},
\end{align*}
which means both of the following hold: 
\begin{align*}
     &\tclvf(\state, \tinit) \geq  e^{\gamma (t_1-t)} V_\gamma(\xi(t_1),t_1), \\
     &\tclvf(\state, \tinit) \geq \sup _{\dmap \in \Dmap_\tinit} \inf _{\csig \in \cfset_\tinit} \max_{s\in[t, t_1]} e^{\gamma (s-t)} \loss(\xi(s)) .
\end{align*}
The first inequality and \eqref{eqn:proof_TV_CLVF_lip2} implies 
\begin{align*}
    \tclvf(\state, \tinit) &\geq e^{\gamma (t_1-t)} V_\gamma(\xi(t_1),t_1) \\
    &\geq  V_\gamma(\xi(t_1),t_1) \\
    &\geq V_\gamma(x,t_1) - e^{-\gamma t_1} c\| x-\xi(t_1) \|,
\end{align*}
therefore we have
\begin{align*}
    \| \tclvf(\state, \tinit) - \tclvf(\state, t_1) \|  & \leq e^{-\gamma t_1} c\| x-\xi(t_1) \| \\
    & \leq e^{-\gamma t_1} c L_f \| t_1 - t \|. 
\end{align*}}

\subsection{Proof of Theorem~\ref{thm:TV-R-CLVF-VI-VS}} \label{App: proof_thrm2}

\begin{proof}
{Following~\cite{barron1989bellman}, 
\begin{enumerate}
    \item $\tclvf $ is a viscosity \textbf{sub-}solution of~\eqref{eqn:TV-R-CLVF-VI}, if for any $\phi \in C^1(\mathbb R^n \times (-\infty,0])$, $(x_0,t_0) \in \mathbb R^n \times (-\infty,0]$ is a local maxima for $\tclvf - \phi$, then
    \begin{align}\label{eqn:TV_VS_sub}
        &0 \leq \max\biggl\{\loss(x_0) - \tclvf (x_0,t_0), \hspace{1em} D_\tinit \phi(x_0,t_0)+ \notag \\
         &\max_{\dstb \in \dset } \min_{ \ctrl \in \cset} D_\state \phi(x_0,t_0) \cdot \dyn(x_0, \ctrl, \dstb) + \gamma \tclvf(x_0,t_0)  \biggl\},
    \end{align}
     \item $\tclvf$ is a viscosity \textbf{super-}solution of~\eqref{eqn:CLVF-VI}, if for any $\psi \in C^1(\mathbb R^n \times (-\infty,0])$, $(x,t) \in \mathbb R^n \times (-\infty,0]$ is a local minima for $\tclvf - \psi$, then
    \begin{align}\label{eqn:TV_VS_super}
        &0 \geq \max\biggl\{\loss(x_0) - \tclvf (x_0,t_0), \hspace{1em} D_\tinit \psi(x_0,t_0)+ \notag \\
         &\max_{\dstb \in \dset } \min_{ \ctrl \in \cset} D_\state \psi(x_0,t_0) \cdot \dyn(x_0, \ctrl, \dstb) + \gamma \tclvf(x_0,t_0)  \biggl\},
    \end{align}
\end{enumerate}
W.L.O.G, we could always assume $\state$ is a strict local maxima (minima), and the maximum (minimum) value is $0$, i.e., $\tclvf(\state_0,t_0) - \phi(\state,t_0) = 0$ ($\tclvf(\state_0,t_0) - \psi(\state_0,t_0) = 0$). Finally, $\tclvf$ is a viscosity solution of~\eqref{eqn:TV-R-CLVF-VI} if it is both a sub-solution and a supersolution. }

{We will prove by contradiction. We start with the sub-solution. Assume~\eqref{eqn:TV_VS_sub} is wrong, then there exists $\theta_1, \theta_2 > 0$ s.t. both the followings hold 
\begin{align}
    &\loss(\state_0) - \tclvf(\state_0,\tinit_0) \leq -\theta_1 < 0, \label{eqn:TV_VS_sub1} \\
    &  D_\tinit \phi(x_0,t_0) + \max _{\dstb \in \dset} \min _{ \ctrl \in \cset} D_x \phi(x_0,t_0) \cdot \dyn(\state_0, \ctrl, \dstb) \notag \\
    & + \gamma \tclvf(x_0,t_0) \leq -\theta_2 < 0. \label{eqn:TV_VS_sub2}
\end{align}
By continuity of $\loss$ and $\xi$, there exists $\delta_1 > 0$ s.t. for any $\csig, \dmap[\ctrl]$, $\tvar \in [\tinit_0, \tinit_0 + \delta_1]$ 
\begin{align} \label{eqn:TV_VS_l-V}
    | e^{\gamma (\tvar - \tinit_0)} \loss (\xi (s;t_0,x_0,u,\dmap [\ctrl])) - \loss (\state_0) | \leq \frac{\theta_1}{2}.
\end{align}
Combined with~\eqref{eqn:TV_VS_sub1}, we have
\begin{align} \label{eqn:TV_VS_sub3}
    e^{\gamma (\tvar - \tinit_0)} \loss (\xi(s;t_0,x_0,u(\cdot),\dmap)) &\leq  \loss (\state_0) + \frac{\theta_1}{2} , \notag \\
    &\leq  \tclvf(\state_0,\tinit_0) -\frac{\theta_1}{2}. 
\end{align}
Further, since $\tclvf(\state_0,\tinit_0) = \phi (\state_0,\tinit_0)$, ~\eqref{eqn:TV_VS_sub2} can be written as
\begin{align*}
      D_\tinit \phi(x_0,t_0) + \max _{\dstb \in \dset} \min _{ \ctrl \in \cset} D_x \phi(x_0,t_0) \cdot \dyn(\state_0, \ctrl, \dstb) + \notag \\
    \gamma \phi(x_0,t_0) \leq -\theta_2
\end{align*}
and for any $ \dstb \in \dset$,
\begin{align*}
     D_\tinit \phi(x_0,t_0) +\min _{ \ctrl \in \cset} D_x \phi(x_0,t_0) \cdot \dyn(\state_0, \ctrl, \dstb) + \gamma \phi(\state_0,t_0)  \\ \leq -\theta_2.
\end{align*}
Since $\phi \in C^1$, there exists $\delta > 0, \bar \ctrl \in \cset$, $\tvar \in [\tinit_0, \tinit_0 + \delta]$ s.t. 
\begin{align*}
       D_\tinit \phi(\xi(s),s) + D_x \phi(\xi(\tvar),s) \cdot \dyn(\xi(\tvar), \bar \ctrl(\tvar), \dmap[ \bar \ctrl](\tvar)) \\ + \gamma \phi(\xi(\tvar),s)
       \leq -\frac{\theta_2}{2}.
\end{align*}
Muliplly both side with $e^{\gamma(\tvar-\tinit_0)}$ and integrate on both side for $\tvar \in [\tinit_0, \tinit_0+\delta]$, we get:
\begin{align*}
    e^{\gamma \delta} \phi (\xi(t_0+\delta;x_0,t_0,\bar \ctrl (\cdot), \dmap[\bar \ctrl]), t_0+\delta) -\phi(\state_0,t_0) \\
    \leq -\frac{\theta_2 (e^{\gamma \delta} - 1)}{2\gamma}.
\end{align*}
Since $\tclvf- \phi $ has local maxima at $(\state_0,t_0)$ and the maximum value is $0$, we have
\begin{align}
    &e^{\gamma \delta} \tclvf (\xi(t_0+\delta;x,t,\bar \ctrl (\cdot), \dmap[\bar \ctrl]),t_0+\delta)  \notag \\ 
    \leq  &\phi(\state_0,t_0)  -\frac{\theta_2 (e^{\gamma \delta} - 1)}{2\gamma}
    =  \tclvf (\state_0,t_0)  -\frac{\theta_2 (e^{\gamma \delta} - 1)}{2\gamma}.  \label{eqn:TV_VS_sub4}
\end{align}
Combine~\eqref{eqn:TV_VS_sub3} and~\eqref{eqn:TV_VS_sub4}, and because $d$ is arbitrary, we have
\begin{align*}
     &\tclvf (\state_0,t_0)  - \min \bigl\{ \frac{\theta_1}{2}, \frac{\theta_2 (e^{\gamma \delta} - 1)}{2\gamma} \bigr\} \\
      \geq & \sup_{\dmap} \max \bigl\{ e^{\gamma (\tvar - \tinit_0)} \loss (\xi(s;t_0,x_0,t,\dmap)), \\
      & \hspace{6em} e^{\gamma \delta} \tclvf (\xi(t_0+\delta;x_0,t_0,\bar \ctrl (\cdot), \dmap[\bar \ctrl]),t_0+\delta) \bigr\} \\
      \geq & \tclvf(\state_0,t_0),
\end{align*}
where the last inequality is from~\eqref{eqn:TV-R-CLVF_DPP}. This is a contradiction, so~\eqref{eqn:TV_VS_sub} must hold, i.e., $\tclvf$ is a viscosity sub-solution.}

{Next, we examine the super-solution. Assume~\eqref{eqn:TV_VS_super} is wrong, then $\exists \theta_1, \theta_2 > 0$ s.t. one of the followings hold 
\begin{align}
    &\loss(\state_0) - \tclvf(\state_0,t_0) \geq \theta_1 > 0, \label{eqn:TV_VS_super1} \\
    & D_\tinit \psi(x_0,t_0) + \max _{\dstb \in \dset} \min _{ \ctrl \in \cset} D_x \psi(x_0,t_0) \cdot \dyn(\state_0, \ctrl, \dstb)  \notag \\ 
    &+ \gamma \tclvf (x_0,t_0) \geq \theta_2 > 0. \label{eqn:TV_VS_super2}
\end{align}
If~\eqref{eqn:TV_VS_super1} holds, then from~\eqref{eqn:TV_VS_l-V}, there exists $\delta _1 >0$ s.t. for any $\theta_1>0, \csig, \dmap$ and $\tvar \in [\tinit_0, \tinit_0+\delta _1]$
\begin{align*} 
    e^{\gamma (\tvar - \tinit_0)} \loss (\xi(s;t_0,x_0,u(\cdot),\dmap)) &\geq  \loss (\state_0) - \frac{\theta_1}{2} \\
    &\geq  \tclvf(\state_0,t_0) +\frac{\theta_1}{2}.
\end{align*}
Plug in the DPP~\eqref{eqn:TV-R-CLVF_DPP}, we have 
\begin{align*}
    \tclvf(\state_0,t_0) \geq &\sup_{\dmap} \inf_{\csig} \max_{s\in[\tinit_0, \tinit_0+ \delta]}e^{\gamma (\tvar - \tinit_0)} \loss (\xi(s;t_0,x_0,u(\cdot),\dmap)) \\
    \geq& \tclvf(\state_0,t_0) + \frac{\theta_1}{2},
\end{align*}
which is a contradiction. Therefore~\eqref{eqn:TV_VS_super1} cannot be true. 
If~\eqref{eqn:TV_VS_super2} holds, then from the same derivation of~\eqref{eqn:TV_VS_sub4}, there exists $\delta_2 > 0$, $\exists \dmap, \forall \csig$, s.t. $\forall \tvar \in [\tinit, \tinit+\delta_2]$
\begin{align*}
    &e^{\gamma \delta} \tclvf (\xi(t_0+\delta;t_0,x_0,\bar \ctrl (\cdot), \dmap[\bar \ctrl]), t_0+\delta)   \\ 
    \geq  &\psi(\state_0,t_0)  +\frac{\theta_2 (e^{\gamma \delta} - 1)}{2\gamma}
    =  \tclvf (\state_0,t_0)  +\frac{\theta_2 (e^{\gamma \delta} - 1)}{2\gamma}.  
\end{align*}
Again plug in DPP~\eqref{eqn:TV-R-CLVF_DPP}, we have
\begin{align*}
    \tclvf(\state_0,t_0) \geq &\sup_{\dmap} \inf_{\csig} e^{\gamma \delta} \tclvf (\xi(t_0+\delta;x,t,\bar \ctrl (\cdot), \dmap[\bar \ctrl]), t_0+ \delta)  \\
    \geq& \tclvf(\state_0,t_0) + \frac{\theta_2 (e^{\gamma \delta} - 1)}{2\gamma},
\end{align*}
which is a contradiction. Therefore~\eqref{eqn:TV_VS_super2} cannot be true. Combined,~\eqref{eqn:TV_VS_super} must hold, so $\clvf$ is a viscosity super-solution. }


{To prove the uniqueness, we apply the standard comparison principle. It should be noted that since we specify $\loss (x;p) = \|x-p\| - \minval $, the TV-R-CLVF~\eqref{eqn:finite_time_CLVF} is radially unbounded. However, given any finite $\tinit \leq 0$, its spatial growth is linearly bounded, i.e., there exists $C$ s.t. for all $x\in \mathbb R^n, t\leq 0$
\begin{align}\label{eqn:proof_TV_unique_1}
    \tclvf(x,t) \leq C (1+\|x\|). 
\end{align}
This is because $f$ is bounded, and the exponential term is bounded by $e^{\gamma t}$. Consider continuous functions $V_1$ and $V_2$ both Lipschitz (with Lipschitz constant $L_v$), radially unbounded, and satisfying~\eqref {eqn:proof_TV_unique_1}. Assume $V_1$ and $V_2$ are viscosity sub- and supersolutions of~\eqref{eqn:TV-R-CLVF-VI}, respectively. Define
\begin{align*}
    V_k(x,t) = V_1(x,t) + k(a\|x\|^2+c) 
\end{align*}
for constants $a,c,k>0$. Note that the growth of $V_k$ is quadratically bounded and Lipschitz, with Lipschitz constant $L_k$. Then $\loss(x)- V_k(x,t) < 0 $, and $V_k$ is the strict viscosity subsolution to 
\begin{align*}
    D_t \phi+ H(x,D_x \phi) + \gamma V_k \geq k,
\end{align*}
where $H$ is the Hamiltonian given by~\eqref{eqn:Hamitonian}. Recall that $H$ is bounded above and Lipschitz continuous. Denote 
\begin{align*}
    M_k = \sup_{[x,s]\in \mathbb R^n\times [t,0] }V_2(x,s)- V_k(x,s),
\end{align*}
and assume $M_k>0$. For any $\varepsilon,\eta,\delta > 0$, define:
\begin{align}
    \Phi_{\varepsilon,\delta} (x,y,s) =& V_k(x,s)-V_2(y,s)- \frac{\|x-y\| ^2}{2\varepsilon} \notag \\
    &-\delta(\|x\|^4+\|y\|^4).
\end{align}
Since $\Phi \rightarrow -\infty$ as $\|x\|+\|y\| \rightarrow \infty$, $\Phi$ attains its maximum at some state $(\bar x, \bar y, \bar s)$. Therefore, we have:
\begin{align*}
    \Phi(\bar x,\bar x, \bar s)+ \Phi(\bar y, \bar y, \bar s) \leq 2 \Phi(\bar x, \bar y , \bar s ),
\end{align*}
which gives us 
\begin{align*}
   \frac{ \|\bar x- \bar y \|^2} {\epsilon} &\leq V_k(\bar x , \bar s) - V_k(\bar y, \bar s) + V_2(\bar x , \bar s) - V_2(\bar y, \bar s) \\
   & \leq  (L_k+L_v) \| \bar x - \bar y\|. 
\end{align*}
This gives us 
\begin{align}\label{eqn:proof_uni_1}
     \|\bar x- \bar y \| \leq (L_k+L_v) \varepsilon,
\end{align}
i.e., $\bar x - \bar y \rightarrow 0$ as $\varepsilon \rightarrow 0$. Define test functions
\begin{align*}
    \phi(x,t) &= V_2(\bar y, \bar s) + \frac{\|x-\bar y\| ^2}{2\varepsilon} + \delta(\|x\|^4+\|\bar y\|^4),\\
    \psi (y,t) & =V_k(\bar x, \bar s) - \frac{\|\bar x- y\| ^2}{2\varepsilon} - \delta(\|\bar x\|^4+\| y\|^4),
\end{align*}
and we can check $V_k - \phi$ attains maximum at $(\bar x, \bar y, \bar s)$, $V_2 - \psi$ attains minimum at $(\bar x, \bar y, \bar s)$. Since $V_k$ is a strict viscosity subsolution, the following hold:
\begin{align}
    D_\tinit \phi(\bar x,\bar s) + H(\bar x, D_x\phi(\bar x,\bar s)) + \gamma V_k(\bar x,\bar s) \geq k .\label{eqn:proof_uni_sub2}
\end{align}
Since $V_2$ is a viscosity supersolution, from~\eqref{eqn:TV_VS_super}, both of the following hold
\begin{align}
    \loss (\bar y) - V_2(\bar y, \bar s) \leq 0 ,\label{eqn:proof_uni_sup1} \\
    D_\tinit \psi(\bar y,\bar s) +H(\bar y, D_x\psi (\bar y, \bar s)) + \gamma V_2(\bar y,\bar s) \leq 0. \label{eqn:proof_uni_sup2}
\end{align}
Then since $D_\tinit \phi(\bar x,\bar s)= D_\tinit \psi(\bar y,\bar s)$, from~\eqref{eqn:proof_uni_sub2} and~\eqref{eqn:proof_uni_sup2}, we have
\begin{align*}
    &\gamma (V_2 (\bar y,\bar s)- V_k(\bar x, \bar s)) \\
    \leq &- k + ( H(\bar x, D_x\phi(\bar x,\bar s)) - H(\bar y, D_x\psi (\bar y, \bar s))),\\
    \leq & -k + L_{Hx}(\bar x - \bar y) + L_{Hp}(D_x\phi(\bar x,\bar s)- D_x\psi(\bar y, \bar s)),
\end{align*}
where $L_{Hx}$ and $L_{Hp}$ are the Lipschitz constants of $H$ over $x$ and $p$ respectively. By the definition of $\phi,\psi$, we have
\begin{align*}
    D_x\phi(\bar x,\bar s)- D_x\psi(\bar y, \bar s) 
    = 3\delta (\|\bar x] \bar x - \|\bar y \| \bar y). 
\end{align*}
Combining with~\eqref{eqn:proof_uni_1}, as $k\rightarrow 0$, we can always pick $\varepsilon,\delta$ small enough s.t. 
\begin{align*}
    V_2 (\bar y,\bar s)- V_k(\bar x, \bar s) \leq - \frac{k}{2}. 
\end{align*} 
This contradicts the assumption that $M_k > 0$. Therefore, we proved $V_2\leq V_1$. Switching the role of $V_1$ and $V_2$ and following the identical process, we can prove $V_1 \leq V_2$. Therefore the TV-R-CLVF has a unique viscosity solution. }

\end{proof}

\subsection{Proof of Proposition~\ref{Prop:R-CLVF-lipschitz}}\label{App: proof_prop2}
{Under the assumption that the system is exponentially stabilizable, $\forall x,y \in \mathcal \dom$, there exists constants $c>0, \alpha>0$ such that 
\begin{align}\label{eqn:proof_CLVF_LIP_1}
     \|\xi(s;t,x,u(\cdot),\dmap) - \xi(s;t,y,u(\cdot),\dmap)\| \notag \\
     \leq  e^{-\alpha (s-t)}c\|x-y\|.
\end{align}
Following the same process as in the proof of Proposition~\ref{prop: TV-R-CLVF_lipschitz}, we can pick $\lambda,u$ s.t.
\begin{align*}
     & \| J_\gamma (\tinit,x,u(\cdot),\dmap) - J_\gamma (\tinit,y,u(\cdot),\dmap) \|\\
    =&\| \max _{\tvar \in [\tinit, \thor]} e^{\gamma (\tvar-\tinit)} \ell\big(\xi(s;t,x,u(\cdot),\dmap)\big) - \\
    & \hspace{3em} \max _{\tvar \in [\tinit, \thor]} e^{\gamma (\tvar-\tinit)}\ell \big(\xi(s;t,y,u(\cdot),\dmap) \big) \|\\ 
    \leq & \max _{\tvar \in [\tinit, \thor]} \| e^{\gamma (\tvar-\tinit)} \ell\big(\xi(s;t,x,u(\cdot),\dmap)\big)  -\\
    & \hspace{3em} e^{\gamma (\tvar-\tinit)} \ell\big(\xi(s;t,y,u(\cdot),\dmap)\big) \| \\
    \leq & \max _{\tvar \in [\tinit, \thor]}  e^{\gamma (s-t)} \|  \ell\big(\xi(s;t,x,u(\cdot),\dmap)\big)  -\\
    & \hspace{3em} \ell\big(\xi(s;t,y,u(\cdot),\dmap)\big) \|. 
\end{align*}
Plugging in~\eqref{eqn:proof_CLVF_LIP_1}, we get 
\begin{align}
    & \| J_\gamma (\tinit,x,u(\cdot),\dmap) - J_\gamma (\tinit,y,u(\cdot),\dmap) \|  \notag \\
    \leq & \max _{\tvar \in [\tinit, \thor]}  e^{\gamma (s-t)}  e^{-\alpha (s-t)}c\|x-y\|  \notag \\
    = & \max _{\tvar \in [\tinit, \thor]} e^{(\gamma-\alpha) (s-t)} c\|x-y\| \notag \\
    \leq &c|x-y\|, 
\end{align}
where the last inequality is because $\gamma \leq \alpha$. Note that this inequality holds for all $t\leq 0$, and $\lambda$ and $u$ are aribitrary, we conclude that 
\begin{align*}
    \| \tclvf(x,t) - \tclvf(y,t) \| \leq c\| x-y \|.
\end{align*}
Since the Lipschitz constant is independent of $t$, we can pass it to the limit, which means
\begin{align*}
    \| \clvf(x) - \clvf(y) \| \leq c \|x-y\|. 
\end{align*}}

\subsection{Proof of Proposition~\ref{prop: Radially_unbounded}}\label{App: proof_prop3}
{Let us assume there exists a constant $C>0$ s.t. as $\state \rightarrow \partial \dom$, $\clvf(\state) \leq C$. From the R-CLVF-DPP~\eqref{eqn:CLVF_DPP}, for any $\tinit \leq \tinit + \delta \leq 0$ both of the followings must hold: 
\begin{align*}
    & \clvf(\state) \geq e^{\gamma \delta} \clvf(z), \\
    & \clvf(\state) \geq \sup_{\dmap} \inf_{\csig} \max_{\tvar \in [\tinit, \tinit+\delta]} e^{\gamma (\tvar -\tinit)} \loss (\trajmap).
\end{align*}
From the first inequality, there exists some $\epsilon > 0$ s.t. 
\begin{align*}
    \clvf(\state) \geq &e^{\gamma \delta} \clvf(z) \\
     \geq & e^{\gamma \delta} (C- \epsilon) \\
     = & C - (C+e^{\gamma \delta} \epsilon - e^{\gamma \delta} C).
\end{align*}
For any constants $C, \epsilon$, we could find a $\delta$ large enough s.t. $C+e^{\gamma \delta} \epsilon - e^{\gamma \delta} C < 0$. This means 
\begin{align*}
    \clvf(\state) > C,
\end{align*}
which is a contradiction, and therefore such $C$ does not exists. }

\subsection{Proof of Proposition~\ref{prop:Vdot<=-gammaV}}\label{App: proof_prop4}
\begin{proof}
{Since the R-CLVF is only Lipschitz continuous, there exist points that are not differentiable. For those points, \cite{crandall1984some} showed that either a super-differential ($D^+\clvf (\state)$) or a sub-differential ($D^-\clvf (\state)$) exists, whose elements are called super-gradients and sub-gradients respectively. A function is differentiable at $x$ if $D^-\clvf (\state) = D^+\clvf (\state)$. Non-differentiable points only have a super-differential or sub-differential. At non-differentiable points, define $\dot \clvf (\state) = p\cdot f(x,u)$, where $p$ is either a sub-gradient or a super-gradient.}

{For non-differentiable points with super-differential, the corresponding solution is called a sub-solution, and
\begin{align*}
   &\max\biggl\{\loss(x) - \clvf (\state), \hspace{1em} \max _{\dstb \in \dset} \min _{\ctrl \in \cset}p^+ \cdot f(x, u,d) \\
  & \hspace{2em} + \gamma \clvf (\state) \biggl\} \leq 0,  \quad \forall p^+ \in D^+\clvf (\state).
\end{align*}
The maximum of the two terms is less or equal to 0, which implies both terms must be less or equal to 0: 
\begin{align*}
    \forall p^+ \in D^+\clvf (\state) \text{, } \max _{\dstb \in \dset} \min_{\ctrl \in \cset} p^+ \cdot f(x, u ,d ) \leq -\gamma \clvf (\state).
\end{align*}
This means for any super-gradients, there exists some control input, that will provide a sufficient decrease in the value along the trajectory. }

{When there exists sub-differential, we have: 
\begin{align*}
   &\max\biggl\{\loss(x) - \clvf (\state), \hspace{1em} \max _{\dstb \in \dset} \min _{\ctrl \in \cset}p^- \cdot f(x, u,d) \\
  & \hspace{2em} + \gamma \clvf (\state) \biggl\} \leq 0,  \quad \forall p^- \in D^-\clvf (\state).
\end{align*}
Since the R-CLVF is a Lipschitz continuous viscosity solution, we can directly apply Theorem 2.3 in \cite{frankowska1989hamilton}, we have
\begin{align*}
    \forall p^- \in D^-\clvf (\state) \text{ , } \max _{\dstb \in \dset}\min _{\ctrl \in \cset} p^- \cdot f(x, u ,d ) = -\gamma \clvf (\state).
\end{align*}
Combined, we get the desired inequality: $\dot \clvf  \leq - \gamma \clvf$ holds for all points in $\dom$.}
\end{proof}



\end{appendix}

\bibliographystyle{IEEEtran}
\bibliography{ref}

@ARTICLE{4518905,
  author={Cortes, Jorge},
  journal={IEEE Control Systems Magazine}, 
  title={Discontinuous dynamical systems}, 
  year={2008},
  volume={28},
  number={3},
  pages={36-73},
  keywords={Control systems;Robots;Switches;Temperature control;Adaptive control;Open loop systems;Optimal control;Cooling;Sliding mode control;State-space methods},
  doi={10.1109/MCS.2008.919306}}

@ARTICLE{10365682,
  author={He, Chong and Gong, Zheng and Chen, Mo and Herbert, Sylvia},
  journal={IEEE Control Systems Letters}, 
  title={Efficient and Guaranteed Hamilton–Jacobi Reachability via Self-Contained Subsystem Decomposition and Admissible Control Sets}, 
  year={2023},
  volume={7},
  number={},
  pages={3824-3829},
  keywords={Safety;Optimal control;Reachability analysis;Automobiles;Three-dimensional displays;Aerospace electronics;Trajectory;Optimal control;autonomous systems;robotics},
  doi={10.1109/LCSYS.2023.3344659}}

@article{gong2022constructing,
  title={Constructing Control Lyapunov-Value Functions Using Hamilton-Jacobi Reachability Analysis},
  author={Gong, Zheng and Zhao, Muhan and Bewley, Thomas and Herbert, Sylvia},
  journal={IEEE Control Systems Letters},
  volume={7},
  pages={925--930},
  year={2022},
  publisher={IEEE}
}

@inproceedings{chen2018data,
  title={Data-driven computation of minimal robust control invariant set},
  author={Chen, Yuxiao and Peng, Huei and Grizzle, Jessy and Ozay, Necmiye},
  booktitle={2018 IEEE Conference on Decision and Control (CDC)},
  pages={4052--4058},
  year={2018},
  organization={IEEE}
}

@article{varaiya1967existence,
  title={On the existence of solutions to a differential game},
  author={Varaiya, Pravin P},
  journal={SIAM Journal on Control},
  volume={5},
  number={1},
  pages={153--162},
  year={1967},
  publisher={SIAM}
}

@InProceedings{ian2005levelset,
author="Mitchell, Ian M.
and Templeton, Jeremy A.",
title="A Toolbox of {H}amilton-{J}acobi Solvers for Analysis of Nondeterministic Continuous and Hybrid Systems",
booktitle="Hybrid Systems: Computation and Control",
year="2005",
}

@inproceedings{fisac2015reach,
  title={Reach-avoid problems with time-varying dynamics, targets and constraints},
  author={Fisac, Jaime F and Chen, Mo and Tomlin, Claire J and Sastry, S Shankar},
  booktitle={Hybrid Systems: Computation and Control},
  year={2015},
  organization={ACM}
}

@INPROCEEDINGS{hjreachabilityoverview,
    author={S. {Bansal} and M. {Chen} and S. {Herbert} and C. J. {Tomlin}},
    booktitle={Conf. on Decision and Control},
    title={{H}amilton-{J}acobi reachability: A brief overview and recent advances},
    year={2017},
}

@article{evans_hj,
 author = {L. C. {Evans} and P. E. {Souganidis}},
 journal = {Indiana University Mathematics Journal},
 publisher = {Indiana University Mathematics Department},
 title = {Differential Games and Representation Formulas for Solutions of {H}amilton-{J}acobi-{I}saacs Equations},
 year = {1984}
}

@book{bardi2008optimal,
  title={Optimal control and viscosity solutions of {H}amilton-{J}acobi-{B}ellman equations},
  author={Bardi, Martino and Capuzzo-Dolcetta, Italo},
  year={2008},
  publisher={Springer}
}

@article{crandall1983viscosity,
  title={Viscosity solutions of Hamilton-Jacobi equations},
  author={Crandall, Michael G and Lions, Pierre-Louis},
  journal={Trans. of the American mathematical society},
  year={1983}
}

@article{crandall1984some,
  title={Some properties of viscosity solutions of Hamilton-Jacobi equations},
  author={Crandall, Michael G and Evans, Lawrence C and Lions, P-L},
  journal={Trans. of the American Mathematical Society},
  year={1984}
}

@article{barron1989bellman,
  title={The Bellman equation for minimizing the maximum cost.},
  author={Barron, EN and Ishii, H},
  journal={Nonlinear Anal. Theory Methods Applic.},
  volume={13},
  number={9},
  pages={1067--1090},
  year={1989}
}

@article{choi2021robust,
  title={Robust control barrier-value functions for safety-critical control},
  author={Choi, Jason J and Lee, Donggun and Sreenath, Koushil and Tomlin, Claire J and Herbert, Sylvia L},
  journal={Conf. on Decision and Control},
  year={2021}
}

@article{frankowska1989hamilton,
  title={Hamilton-Jacobi equations: viscosity solutions and generalized gradients},
  author={Frankowska, H{\'e}l{\`e}ne},
  journal={Journal of mathematical analysis and applications},
  year={1989},
  publisher={Elsevier}
}

@article{fialho1999worst,
  title={Worst case analysis of nonlinear systems},
  author={Fialho, Ian J and Georgiou, Tryphon T},
  journal={Trans. on Automatic Control},
  year={1999},
}

@INPROCEEDINGS{cbfsurvey,
author={A. D. {Ames} and S. {Coogan} and M. {Egerstedt} and G. {Notomista} and K. {Sreenath} and P. {Tabuada}},
booktitle={European Control Conf.},
title={Control Barrier Functions: Theory and Applications},
year={2019},
}

@ARTICLE{cbfqptac,
    author={A. {Ames} and X. {Xu} and J. W. {Grizzle} and P. {Tabuada}},
    journal={Trans. on Automatic Control},
    title={Control Barrier Function Based Quadratic Programs for Safety Critical Systems},
    year={2017},
}

@article{ames2014rapidly,
  title={Rapidly exponentially stabilizing control {L}yapunov functions and hybrid zero dynamics},
  author={Ames, Aaron D and Galloway, Kevin and Sreenath, Koushil and Grizzle, Jessy W},
  journal={Trans. on Automatic Control},
  year={2014},
}

@article{hassan2002nonlinear,
  title={Nonlinear systems},
  author={Hassan, K Khalil and others},
  journal={Departement of Electrical and Computer Engineering, Michigan State University},
  year={2002}
}

@article{artstein1983stabilization,
  title={Stabilization with relaxed controls},
  author={Artstein, Zvi},
  journal={Nonlinear Analysis: Theory, Methods \& Applications},
  year={1983},
  publisher={Elsevier}
}

@article{sontag1989universal,
  title={A ‘universal’ construction of {A}rtstein's theorem on nonlinear stabilization},
  author={Sontag, Eduardo D},
  journal={Systems \& control letters},
  year={1989},
  publisher={Elsevier}
}

@article{clf_zubov,
author = {Fabio {Camilli} and Lars {Grüne} and Fabian {Wirth}},
title = {Control {L}yapunov Functions and {Z}ubov's Method},
journal = {SIAM Journal on Control and Optimization},
year = {2008},
}

@inproceedings{grune2000computing,
  title={Computing control Lyapunov functions via a Zubov type algorithm},
  author={Grune, Lars and Wurth, F},
  booktitle={Proceedings of the 39th IEEE Conference on Decision and Control (Cat. No. 00CH37187)},
  volume={3},
  pages={2129--2134},
  year={2000},
  organization={IEEE}
}

@book{zubov1961methods,
  title={Methods of AM Lyapunov and their application},
  author={Zubov, Vladimir Ivanovich},
  volume={4439},
  year={1961},
  publisher={US Atomic Energy Commission}
}

@article{dubljevic2002new,
  title={A new {L}yapunov design approach for nonlinear systems based on {Z}ubov's method},
  author={Dubljevi{\'c}, Stevan and Kazantzis, Nikolaos},
  journal={Automatica},
  year={2002},
  publisher={Elsevier}
}

@article{altarovici2013general,
  title={A general Hamilton-Jacobi framework for non-linear state-constrained control problems},
  author={Altarovici, Albert and Bokanowski, Olivier and Zidani, Hasnaa},
  journal={ESAIM: Control, Optimisation and Calculus of Variations},
  volume={19},
  number={2},
  pages={337--357},
  year={2013},
  publisher={EDP Sciences}
}

@ARTICLE{1406138,
  author={Rakovic, S.V. and Kerrigan, E.C. and Kouramas, K.I. and Mayne, D.Q.},
  journal={IEEE Transactions on Automatic Control}, 
  title={Invariant approximations of the minimal robust positively Invariant set}, 
  year={2005},
  volume={50},
  number={3},
  pages={406-410},
  keywords={Robustness;Robust control;Control system synthesis;Linear systems;Linear approximation;Performance analysis;Uncertain systems;Stability;Convergence;Predictive control;Invariant approximations;linear systems;robust control;set invariance},
  doi={10.1109/TAC.2005.843854}}

@article{chesi2009estimating,
  title={Estimating the domain of attraction for non-polynomial systems via LMI optimizations},
  author={Chesi, Graziano},
  journal={Automatica},
  volume={45},
  number={6},
  pages={1536--1541},
  year={2009},
  publisher={Elsevier}
}

@inproceedings{topcu2007stability,
  title={Stability region analysis using simulations and sum-of-squares programming},
  author={Topcu, Ufuk and Packard, Andrew and Seiler, Peter and Wheeler, Timothy},
  booktitle={2007 American Control Conference},
  pages={6009--6014},
  year={2007},
  organization={IEEE}
}

@article{topcu2009robust,
  title={Robust region-of-attraction estimation},
  author={Topcu, Ufuk and Packard, Andrew K and Seiler, Peter and Balas, Gary J},
  journal={IEEE Transactions on Automatic Control},
  volume={55},
  number={1},
  pages={137--142},
  year={2009},
  publisher={IEEE}
}

@article{henrion2013convex,
  title={Convex computation of the region of attraction of polynomial control systems},
  author={Henrion, Didier and Korda, Milan},
  journal={IEEE Transactions on Automatic Control},
  volume={59},
  number={2},
  pages={297--312},
  year={2013},
  publisher={IEEE}
}

@article{gong2024synthesizing,
  title={Synthesizing Control Lyapunov-Value Functions for High-Dimensional Systems Using System Decomposition and Admissible Control Sets},
  author={Gong, Zheng and Jeong, Hyun Joe and Herbert, Sylvia},
  journal={arXiv preprint arXiv:2404.01829},
  year={2024}
}

@article{giesl2015review,
  title={Review on computational methods for Lyapunov functions},
  author={Giesl, Peter and Hafstein, Sigurdur},
  journal={Discrete \& Continuous Dynamical Systems},
  year={2015},
  publisher={American Institute of Mathematical Sciences}
}

@article{giesl2008construction,
  title={Construction of a local and global Lyapunov function for discrete dynamical systems using radial basis functions},
  author={Giesl, Peter},
  journal={Journal of Approximation Theory},
  year={2008},
  publisher={Elsevier}
}

@article{clarke1997asymptotic,
  title={Asymptotic controllability implies feedback stabilization},
  author={Clarke, Francis H and Ledyaev, Yuri S and Sontag, Eduardo D and Subbotin, Andrei I},
  journal={Trans. on Automatic Control},
  year={1997},
}

@inproceedings{freeman1996control,
  title={Control Lyapunov functions: New ideas from an old source},
  author={Freeman, Randy A and Primbs, James A},
  booktitle={Conf. on Decision and Control},
  year={1996},
}

@inproceedings{herbert2021safelearning,
    booktitle={Int. Conf. on Robotics and Automation}, 
  title={Scalable Learning of Safety Guarantees for Autonomous Systems using {H}amilton-{J}acobi Reachability},
  author= {S. {Herbert} and J. J. {Choi} and S. {Sanjeev} and M. {Gibson} and K. {Sreenath} and C. J. {Tomlin}},
  year={2021},
}

@inproceedings{bansal2021deepreach,
    booktitle={Int. Conf. on Robotics and Automation}, 
  title={DeepReach: A Deep Learning Approach to High-Dimensional Reachability},
  author= {S. {Bansal} and C. J. {Tomlin}},
  year={2021},
}

@article{decomposition,
  title={Decomposition of reachable sets and tubes for a class of nonlinear systems},
  author={Chen, Mo and Herbert, Sylvia L and Vashishtha, Mahesh S and Bansal, Somil and Tomlin, Claire J},
  journal={Trans. on Automatic Control},
  year={2018},
}

@article{xu2015robustness,
  title={Robustness of control barrier functions for safety critical control},
  author={Xu, Xiangru and Tabuada, Paulo and Grizzle, Jessy W and Ames, Aaron D},
  journal={Int. Federation of Automatic Control},
  year={2015},
  publisher={Elsevier}
}

@article{mitchell2007toolbox,
  title={A toolbox of level set methods},
  author={Mitchell, Ian M and others},
  journal={UBC Department of Computer Science Technical Report TR-2007-11},
  year={2007}
}

@article{chenoptimal,
  title={Optimal control helper toolbox},
  author={Chen, Mo and Herbert, Sylvia and Bansal, Somil and Tomlin, Claire},
  journal={URL https://github. com/HJReachability/helperOC}
}

@article{appendix,
  title={Constructing Control Lyapunov-Value Functions using Hamilton-Jacobi
Reachability Analysis: Appendix},
  author={Gong, Zheng and Zhao, Muhan and Bewley, Tom and Herbert, Sylvia},
  journal={URL https://sylviaherbert.com/clvf}
}

\end{document}